\documentclass[sigconf, nonacm]{acmart} 

\AtBeginDocument{%
  }

\usepackage{algorithm}
\usepackage{algpseudocode}
\usepackage{subfig}

\algdef{SE}[SUBALG]{Indent}{EndIndent}{}{\algorithmicend\ }%
\algtext*{Indent}
\algtext*{EndIndent}

\usepackage{listings}
\DeclareMathOperator{\vol}{vol}

\newcommand{\critPts}[0]{{\operatorname{CritPts}}}
\newcommand{\critVals}[0]{{\operatorname{CritVals}}}
\newcommand{\ord}[0]{{\operatorname{ord}}}

\newcommand{\pr}[0]{{\operatorname{pr}}}
\newcommand{\val}[0]{{\operatorname{val}}}

\begin{document}

\title{Exact Volumes of Semi-Algebraic Convex Bodies}

\author{Lakshmi Ramesh}
\orcid{0009-0004-1788-5017}
\authornote{The research of L.R. is funded by the Deutsche Forschungsgemeinschaft (DFG, German Research Foundation) – project number 539663130.}
\affiliation{%
  \institution{Universität Bielefeld}
  \country{Germany}}
\email{lramesh@math.uni-bielefeld.de}

\author{Nicolas Weiss}
\orcid{0009-0000-8252-2045}
\authornote{The research of N.W. is funded by the European Union (ERC, UNIVERSE PLUS, 101118787). Views and opinions expressed are, however, those of the author(s) only and do not necessarily reflect those of the European Union or the European Research Council Executive Agency. Neither the European Union nor the granting authority can be held responsible for them.}
\affiliation{%
  \institution{Max Planck Institute for Mathematics in the Sciences}
  \city{Leipzig}
  \country{Germany}}
\email{nicolas.weiss@mis.mpg.de}

\begin{abstract}

We compute the volumes of convex bodies that are given by inequalities of concave polynomials.
These volumes are found to arbitrary precision thanks to the representation of periods by linear differential equations. Our approach rests on work of Lairez, Mezzarobba, and Safey El Din.
We present a novel method to identify the relevant critical values. Convexity allows us
to reduce the required number of creative telescoping steps by an exponential factor. 
We provide
an implementation based on the \texttt{ore\_algebra} package in SageMath. We present examples computed with our implementation in $2,\; 3$ and $4$ dimensions.

\end{abstract}

\keywords{Semi-algebraic sets; Picard-Fuchs equations; Symbolic-numeric algorithms; Volume computation; Holonomic functions}

\settopmatter{printfolios=true} 
\maketitle

\section{Introduction}\label{sec:intro}

In this paper, we compute volumes of semi-algebraic convex bodies defined by finitely many concave polynomials to arbitrary precision. 
Already in the simplest case of polytopes, it is known that exact volume computation is a \#P-hard problem \cite{volumes-is-difficult}, 
\cite{Khachiyan1993}, \cite{dyer-frieze} 
and is useful in decidability problems, among others \cite{GE2018110}. 
Computing volumes of more general convex semi-algebraic sets with high precision also finds applications in various fields such as portfolio optimization, where the exact volumes of the simplex intersected with parallel hyperplanes and ellipsoids are required \cite{portfolio}, and  geometric statistics, where intersections of a convex body with its translates arise as maximum likelihood estimator (MLE) sets \cite{rameshwahl}. The volume of the MLE set is an important quantity to estimate the barycenter. 

The examples considered in \cite{rameshwahl} are $\ell_p$-balls and their intersections in $\mathbb R^n$. These objects are not only basic semi-algebraic sets, but the polynomials that define them are also concave functions. In this article, we focus on semi-algebraic convex bodies of the form
\begin{equation}\label{eq:class}
    C = \{ x \in \mathbb R^n \mid f_1(x) > 0 , \dots, f_k(x) > 0 \}
\end{equation}
where $f_i \in \mathbb Q[x_1, \dots, x_n]$ are concave. Then, the common positivity locus of the polynomials $f_i$ is the intersection of each of their convex supports, and hence also convex. We will in short refer to this class as \textit{semi-algebraic convex bodies}.


Classically, the volume of semi-algebraic sets is approximated using probabilistic methods, such as the Monte Carlo method. If one samples  uniformly from a box containing the set $C$, the ratio 
\begin{equation*}
    \frac{\# \{\text{samples in $C$}\}}{\# \{\text{all samples}\}} \qquad \xrightarrow{\text{converges to}} \qquad\frac{\vol(\operatorname{C}) }{ \vol(\text{Box})}
\end{equation*}
as the number of samples goes to infinity. However, the rate of convergence is $N^{-1/2}$, due to the central limit theorem, which is slow for high-precision computations. To obtain a precision of $4$ decimal digits with high probability then requires $10^{8}$ samples. 

The Monte Carlo method is based on a very physical understanding of the volume of a set and is a probabilistic method. Instead, the volume can be computed using a deterministic approach rooted in an algebraic understanding of volumes as integrals. Consider for example the $\ell_4$-ball $C$ in $\mathbb R^3$. Its volume can be given in closed form using the $\Gamma$-function: 
\begin{align*}
    \vol(C) &=\frac{( 2 \Gamma(1 + 1/4))^3}{\Gamma(1 + 3/4)} \\&
= 6.481987351786382022151846056460487...
\end{align*}
This number is a \textit{period}, which means it is the value of a definite integral.
Another classical volume computation method uses the Strzebonski approach \cite{Strzebonski_volumes} to Collin’s Cylindrical Algebraic Decomposition \cite{Collins-CAD} (CAD) to decompose the integration contour into cells defined by algebraic functions. The volume can then be determined by providing the CAD as input to symbolic or numerical integration methods as implemented in Mathematica~\cite{Mathematica14}. This approach may fail in the case of complicated polynomial input, see Example \ref{eg:mathematica}. Moreover, in some cases, the approach presented in this paper outperforms the CAD approach, see Example~\ref{eg:mathematica2}.

We are interested in computing periods without using symbolic integration but instead relying on the theory of holonomic functions. 
The exact computation of volumes of compact semi-algebraic sets with arbitrary precision was addressed by Lairez, Mezzarobba, and Safey El Din in \cite{Lairez_volumes_semi_algebraic_sets}. They realize the volumes of semi-algebraic sets as periods of rational integrals \cite{Lairez-periods}. Moreover, the volume can then be computed up to arbitrary precision by numerically solving a corresponding univariate linear differential equation, called a Picard--Fuchs equation. Illustrations of this technique can be found in \cite[§ 2]{SatStu25} and it fits into the broader framework of metric algebraic geometry \cite[Ch. 14]{Breiding_Kohn_Sturmfels_2024}.

We apply this to compute volumes of semi-algebraic convex bodies, and in this setting we provide improvements to the algorithm in \cite{Lairez_volumes_semi_algebraic_sets}. We reduce the required steps by a factor that is exponential in the dimension by focusing only on a single interval given by two critical values of a projection. We do this by introducing a new method to select the relevant critical values. 

We provide an implementation~\cite{implementation} of the algorithm. It is written primarily in  SageMath~\cite{sagemath} and uses the \texttt{ore\_algebra} package by Kauers, Jaroschek and Johansson \cite{ore_algebra}. We also incorporate additional software, namely msolve~\cite{msolve}, and the Julia~\cite{bezanson2017julia} package {HypersurfaceRegions.jl} based on~\cite{reinke2024hypersurfacearrangementsgenerichypersurfaces}.


Our article has the following outline. In Section~\ref{sec:background}, we review the necessary results from the theory of holonomic functions that allow us to view the volume of $C$ as the analytic continuation of volumes of deformed sets, each of whose volumes can be computed as a definite integral. The computation is done by solving corresponding differential equations. We elaborate on the algorithm applied to our class of semi-algebraic convex bodies. In Section~\ref{sec:adaptation}, we focus on the convexity of this class, which is closed under deformations and slices of deformations. This property guarantees the existence of only two relevant critical values for any projection in the recursive algorithm. We discuss our methods to compute these relevant critical values. Finally, in Section~\ref{sec:performance}, we elaborate on the reduction in the number of recursive calls in comparison to the general algorithm in \cite{Lairez_volumes_semi_algebraic_sets}. We compute various volumes in dimension $2$, $3$ and $4$, and discuss the Picard--Fuchs operators that arise in our computations. We also discuss questions that arise from these examples and opportunities for improvements of the algorithm.

\begin{acks}
We thank the anonymous reviewers for their careful reading and valuable feedback. We are grateful to Eric Pichon-Pharabod for his help in understanding the software systems involved. We thank Leonie Kayser and Mohab Safey El Din for useful discussions.
We are also grateful to Anna-Laura Sattelberger for her continuous support. And finally we thank Bernd Sturmfels for encouraging us to pursue this project. 
\end{acks}

\section{Volumes via differential equations}\label{sec:background}

\paragraph{Notation.} We denote by $D_{t, \textbf{x}}$ (and $R_{t, \textbf{x}}$) the Weyl algebra (and rational Weyl algebra) of linear differential operators with coefficients in $\mathbb Q[t,x_1,..., x_n]$ (and $\mathbb Q(t,x_1,..., x_n)$). 
We denote the action of a linear differential operator $P \in D_{t,\mathbf{x}}$ on a differentiable function $\varphi$ by $P \bullet \varphi$. The absence of a bullet indicates multiplication.
The notation $\operatorname{Ann}_{D_{t,\mathbf{x}}}(\varphi)$  refers to the left-ideal in $D_{t,\mathbf{x}}$ of linear differential operators $P$ such that $P\bullet \varphi = 0$.  If $\frac{p}{q}$ is the leading coefficient of $P$, then $\deg(P) := \deg(p) - \deg(q)$. We set $\mathbf{x} =(x_1, \dots, x_n)$. 
\\ 

The fundamental starting point for computing volumes using differential operators is a shift in perspective. While, of course, the volume $\vol(C)$ can be realized as an integral in $\mathbb{R}^n$ of the constant function $1$ over $C$ itself, it is much more useful to describe it as an integral over a closed integration contour in $\mathbb{C}^n$ as follows.

\begin{lemma} 
Let $C = \{ x \in \mathbb{R}^n \mid f(x) > 0 \}$ be a bounded region of $\mathbb R^n$ given by a single polynomial $f\in \mathbb{Q}[x_1,\ldots, x_n]$ and assume that its vanishing set $V(f)$ is a smooth variety. 
Then, the volume of the semi-algebraic set $C$ is a period of the rational function 
\begin{align}
    A(x) = \frac{(\partial_{x_1}\bullet f)x_1}{f}.
\end{align}
That is,
    \begin{equation}
        \vol (C) = \frac{1}{2\pi i}\int_{\Gamma} A(x) dx_1\wedge\ldots \wedge dx_n
    \end{equation}
where $\Gamma$ is a closed cycle $\Gamma \subset \mathbb{C}^n-\partial C$.
\end{lemma}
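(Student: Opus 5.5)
The plan is to reduce the statement to a one-dimensional residue computation in the variable $x_1$, and then integrate over the remaining variables. Write $x=(x_1,x')$ with $x'=(x_2,\dots,x_n)$. Since $C$ is bounded, the projection $\pi(C)\subset\mathbb R^{n-1}$ is bounded. For each fixed $x'$, the fiber $C_{x'}=\{x_1\in\mathbb R\mid f(x_1,x')>0\}$ is a finite union of open intervals. Its endpoints are real roots $a_1(x')<\dots<a_{2m}(x')$ of $g(x_1):=f(x_1,x')$.

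By Fubini,
\begin{equation*}
\vol(C)=\int_{\pi(C)}\sum_j \bigl(a_{2j}(x')-a_{2j-1}(x')\bigr)\,dx'.
\end{equation*}
The key one-variable identity is that for a simple root $a$ of $g$,
\begin{equation*}
\operatorname{Res}_{x_1=a}\frac{g'(x_1)\,x_1}{g(x_1)}=a.
\end{equation*}
So if $\gamma_{x'}$ is a small positively oriented circle around $a$, then $\frac{1}{2\pi i}\oint_{\gamma_{x'}} A\,dx_1=\pm a$.

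The total length of the fiber is therefore a signed sum of such residues. We attach a sign $+1$ to right endpoints of intervals and $-1$ to left endpoints; equivalently, the sign is $-\operatorname{sign}\, g'(a)$, up to a global orientation convention. This gives
\begin{equation*}
\operatorname{length}(C_{x'})=\frac{1}{2\pi i}\sum_{a}\epsilon_a\oint_{\gamma_a(x')}A\,dx_1 .
\end{equation*}
Next we let $x'$ vary. The union over $x'\in\pi(C)$ of the signed circles $\gamma_a(x')$ is a tube $\Gamma$ over the real boundary $\partial C$, and it lies in $\mathbb C^n\setminus V(f)\subset\mathbb C^n\setminus\partial C$. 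Fubini for the product form $dx_1\wedge dx'$ then gives the integral formula.

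The main obstacle is that $\Gamma$ built this way need not be a closed cycle. At points $x'$ of $\partial\pi(C)$, and at other branch points where $\partial_{x_1}f=0$, two roots collide and the circles are not defined. Here smoothness of $V(f)$ is what I would use. At a collision point the vertical tangency is a smooth fold of $V(f)$, and the two roots that merge carry opposite signs $\epsilon$. So I would replace the two small circles by a single circle enclosing both roots, with total residue $a_{2j}-a_{2j-1}$. This circle is well defined and continuous in $x'$ as long as it stays away from the other roots. The contribution tends to $0$ as $x'\to\partial\pi(C)$, because the two enclosed roots coalesce.

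Concretely, I would take $\Gamma$ to be the image of a map from $\overline{\pi(C)}\times S^1$. At the boundary of $\pi(C)$, the circles enclosing both colliding roots can be shrunk to a point. This turns the tube into a closed $n$-manifold, topologically a sphere bundle over $\partial C$, which avoids $V(f)$. Equivalently, $\Gamma$ is the Leray coboundary (tube) of the $(n-1)$-cycle $\partial C$, oriented as the boundary of $C$.

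A cleaner route, which I would use to certify closedness, is Stokes. Write $A\,dx=\frac{df}{f}\wedge x_1\,dx_2\wedge\dots\wedge dx_n$ plus an exact correction. Then by the residue formula for the Leray tube,
\begin{equation*}
\frac{1}{2\pi i}\int_{\delta(\partial C)}A\,dx=\int_{\partial C}x_1\,dx_2\wedge\dots\wedge dx_n .
\end{equation*}
The right-hand side equals $\int_C dx_1\wedge\dots\wedge dx_n=\vol(C)$ by Stokes. This requires $\partial C$ to be a smooth compact cycle. That is where the smoothness assumption on $V(f)$, or at least on $\partial C\subset V(f)(\mathbb R)$, enters.
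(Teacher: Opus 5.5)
Your Stokes/Leray route is correct and is the paper's argument. In fact $A\,dx_1\wedge\cdots\wedge dx_n=\frac{df}{f}\wedge x_1\,dx_2\wedge\cdots\wedge dx_n$ holds exactly, with no exact correction needed; Leray's residue theorem then gives $\frac{1}{2\pi i}\int_{\delta(\partial C)}A\,dx=\int_{\partial C}x_1\,dx_2\wedge\cdots\wedge dx_n$, and Stokes turns this into $\vol(C)$, with $\Gamma=\delta(\partial C)$ the Leray coboundary exactly as in the paper.

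You should discard the fiberwise construction rather than call it equivalent. Residues of $g'(x_1)x_1/g(x_1)$ carry no sign; only the orientation of a circle does. So a single positively oriented circle around $a_{2j-1}$ and $a_{2j}$ picks up $a_{2j-1}+a_{2j}$, not $a_{2j}-a_{2j-1}$, and that cycle does not compute the volume (for the unit disk it integrates to $0$), so it cannot be the Leray tube. Separately, smoothness of $V(f)$ does not make every vertical tangency a fold: for $1-x_1^4-x_2^4$ at $(0,1)$, four roots collide.
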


The proof is based on Stokes' theorem and Leray's residue theorem \cite[III, Thm. 2.4]{PhamIntegrals}, a higher-dimensional generalization of Cauchy's integral theorem, which explains the factor of $1/(2\pi i) $. The explicit shape of $\Gamma$, which is the Leray coboundary of $\partial C$, is not required to be known. Only that the contour is closed becomes relevant below. 
Also, the choice of $x_1$ is arbitrary since for a different $x_i$, the integrand is a cohomologous differential form on the complement of $V(f)$. 
 
This distinct viewpoint becomes powerful when the periods depend on an additional parameter $t$, so that they not only describe individual values, such as $\vol(C)$, but functions $\varphi(t)$ such that $\vol(C)=\varphi(t_{\val})$ for some value $t_{\val} \in \mathbb{Q}$.

\begin{definition}
    For an open set $U \subset \mathbb{C}$, a function 
    $\varphi :U\to \mathbb{C}$ 
    is a {\em {period of a rational function dependent on $t$}} if there exists a rational function $A \in \mathbb{C}(t,x_1,\ldots, x_n)$ such that for every $p \in U$, there exists a neighborhood of $p$ where $\varphi(t)$ can be written as 
\begin{equation}\label{eq:vol}
    \varphi(t) = \int_{\Gamma} A(t,x) dx_1\wedge \ldots \wedge dx_n 
\end{equation}
    for a closed cycle $\Gamma$ that is independent of $t$ and lies in the complement of the poles of $A(t,x)$.
\end{definition}

We will see below in Proposition~\ref{prop:slice_volumes_as_periods} that volumes of semi-algebraic sets can be represented by periods of rational functions depending on a single parameter. Theorem~\ref{thm:takayama} then implies that the exact volume can be obtained by numerically solving a linear differential equation
\begin{equation*}
    p_n(t)\frac{d^n\varphi}{dt^n}(t) + \cdots + p_1(t)\frac{d\varphi}{dt}(t) + p_0(t) \varphi(t)= 0 
\end{equation*}
with $p_0,\ldots, p_n \in \mathbb{C}[t]$ for a specific solution $\varphi(t)$ up to arbitrary precision. Note that this linear differential equation can be equivalently written as 
\begin{equation*}
    P \bullet \varphi(t)= 0 \qquad \text{where} \qquad P = \sum p_i\partial^i_t \in D_t .
\end{equation*}

To explain the existence of a non-zero linear differential operator $P$ that annihilates the periods of a rational function, we recall for the reader's convenience the necessary parts from the theory of holonomic functions and $D$-ideals.

A function in a single variable $x_1$ is {\em holonomic} if there exists a non-zero linear differential operator $P \in D_{x_1}$ that annihilates it. A function $f$ in $n$ variables is {\em holonomic} if there exists a holonomic left ideal $I$ \cite[Def. 1.4.8]{SST} in the Weyl algebra $D_\mathbf{x}$ such that all operators $P \in I$ annihilate~$f$. We refer to such an ideal as an {\em annihilating ideal} for~$f$. In practice, to show that a function $f$ is holonomic, it suffices to provide an ideal $I \subset \operatorname{Ann}_{R_\mathbf{x}}(f) \subset R_\mathbf{x}$ of finite holonomic rank since its Weyl closure $D_\mathbf{x} \cap I$ will be a holonomic annihilating ideal in $D_\mathbf{x}$ \cite[Thm. 1.4.15]{SST}. For more background on holonomic functions, we refer the reader to \cite{SatStu25}.

\begin{example}[Rational functions are holonomic]\label{eg:rational-holonomic}
    A rational function $A \in \mathbb{Q}(x_1,\ldots, x_n)$ is holonomic since it is annihilated by the operator $A \partial _i - \partial_i \bullet A $ for all $i$. The annihilating ideal 
    \begin{equation}\label{eg:holonomic}
        I = \langle A \partial _i - \partial_i \bullet A \rangle \subset R_{\mathbf{x}}
    \end{equation}
    has holonomic rank $1$. Hence, its Weyl closure 
    \begin{equation}
        J := D_{\mathbf{x}} \cap I
    \end{equation}
    is a holonomic $D_\mathbf{x}$-ideal annihilating $A$.
\end{example}

Fundamental operations on functions, such as restrictions and integration, have analogous operations at the level of their annihilating $D$-ideals. For integration, the main theoretical statement is the following. It is presented in similar form in {\cite[Thm. 5.5.1]{SST}}.
\begin{theorem}\label{thm:takayama}
    Let $J \subset D_{t, \mathbf{x}}$ be an annihilating ideal of a rational function $A\in \mathbb Q (t, \mathbf{x})$. Then, the ideal 
    \begin{align}
        \mathcal I_{t}(J): = (J + \partial_{x_1}D_{t, \textbf{x}} + \cdots +\partial_{x_n} D_{t,\textbf{x}} ) \quad \cap  \quad  D_t
    \end{align}
    annihilates periods of $A$ that depend on the parameter $t$. 
\end{theorem}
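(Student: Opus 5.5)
The idea is to show that every operator in $\mathcal I_t(J)$ annihilates $\varphi(t)=\int_\Gamma A(t,x)\,dx_1\wedge\cdots\wedge dx_n$. Two facts carry the argument: operators in $D_t$ commute with integration over a $t$-independent cycle, and integrals of $x$-derivatives over closed cycles vanish.

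Fix $P\in\mathcal I_t(J)$. By definition we can write
\begin{equation*}
P = Q + \sum_{i=1}^n \partial_{x_i} R_i, \qquad Q\in J,\ R_i\in D_{t,\mathbf{x}}.
\end{equation*}
Since $Q$ annihilates $A$, we get $P\bullet A=\sum_i \partial_{x_i}\bullet(R_i\bullet A)$. Each $B_i:=R_i\bullet A$ is again a rational function in $(t,\mathbf{x})$. Its poles lie in the pole locus of $A$, because differentiation and multiplication by polynomials create no new poles. Hence $B_i$ is holomorphic near $\Gamma$.

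Next, work locally near a point $p\in U$, where $\Gamma$ is fixed and compact in the complement of the poles for all $t$ in a neighborhood. Then $A(t,x)$ is jointly holomorphic on a neighborhood of $\{t\}\times\Gamma$, so differentiation under the integral sign is justified. Multiplication by polynomials in $t$ also commutes with $\int_\Gamma$. Since $P\in D_t$ involves only $t$ and $\partial_t$, this gives
\begin{equation*}
P\bullet\varphi(t)=\int_\Gamma (P\bullet A)\,dx=\sum_i\int_\Gamma \partial_{x_i}B_i\,dx_1\wedge\cdots\wedge dx_n.
\end{equation*}

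To conclude, observe that $\partial_{x_i}B_i\,dx_1\wedge\cdots\wedge dx_n=d\omega_i$ with
\begin{equation*}
\omega_i=(-1)^{i-1}B_i\,dx_1\wedge\cdots\widehat{dx_i}\cdots\wedge dx_n,
\end{equation*}
a holomorphic $(n-1)$-form on the complement of the poles. Because $\Gamma$ is a closed cycle (boundaryless), Stokes' theorem gives $\int_\Gamma d\omega_i=0$. Thus $P\bullet\varphi=0$ on each local neighborhood, and therefore on all of $U$.

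The main obstacle is the bookkeeping in the second step. One must keep $\mathbf{x}$-derivatives on the left, as in the right ideal $\partial_{x_i}D_{t,\mathbf{x}}$, so that the sum is genuinely an exact form. One must also check that $R_i\bullet A$ introduces no new singularities meeting $\Gamma$, and that the local representations with possibly different cycles $\Gamma$ patch together. Each local piece satisfies the same equation, so the patching is harmless.
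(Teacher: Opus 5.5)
Your proposal is correct and takes essentially the same route as the paper: write $P$ as an element of $J$ plus $\sum_i \partial_{x_i} R_i$, pull $P\in D_t$ out of the integral, and kill each $\partial_{x_i}$-term by Stokes' theorem, using that operators with polynomial coefficients create no poles outside the pole locus of $A$. You also supply details the paper leaves implicit: the justification for differentiating under the integral sign, the explicit primitives $\omega_i$, and the passage from local representations to all of $U$.
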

If $J$ is holonomic, then also its integration ideal $\mathcal I_{t}(J)$ is holonomic by \cite[Thm. 6.10.3]{Takayama2013} and hence non-trivial. It follows then that periods of rational functions depending on a parameter are holonomic.
In our case, recall \eqref{eq:vol}, $A(t,  \mathbf{x})$ is a holonomic function, and therefore $J$ as in Example~\ref{eg:rational-holonomic} is holonomic. Hence, also the integration ideal $\mathcal I_{t}(J)$ is holonomic. This implies that there is a non-zero univariate linear differential operator $P \in D_{t}$ and operators $Q_i \in D_{t,\mathbf{x}}$, such that \begin{align}\label{eq:picard--fuchs}
    P - \partial_{x_1}Q_{x_1} - \cdots - \partial_{x_n} Q_{x_n} \in \operatorname{Ann}_{D_{t, \mathbf{x}}}(A).
\end{align}
It follows from this presentation that $P$ annihilates the definite integral in \eqref{eq:vol}.
\begin{proof}[Proof of Theorem~\ref{thm:takayama}]
    Suppose $P$ and the $Q_1, \ldots, Q_n$ are given as above. Then,
    \begin{equation*}
        0 = \int_\Gamma (P  -\partial_{x_1}Q_{x_1} - \cdots - \partial_{x_n} Q_{x_n} )\bullet A(t,x) dx_1\wedge \ldots \wedge dx_n.
    \end{equation*}
    Since $P$ does not depend on the integration variables it can be taken out of the integral. Each of the remaining terms vanishes by Stokes' theorem. It is applicable, since $Q_{x_i}\bullet A$ defines a smooth function on the complement of the poles of $A$ since $Q_{x_i} \in D_{t,\mathbf{x}}$.
\end{proof}

The process of computing operators of the form \eqref{eq:picard--fuchs} is called {\em{creative telescoping}} \cite{ZEILBERGER1990321}. The operator $P$ is called the {\em telescoper} and the operators $Q_i$ are called the {\em certificates.} In practice, creative telescoping is often done iteratively by integrating out a single variable at a time. That is, given $J$, at each step one computes
\begin{equation}
    \mathcal I_{t, x_1, \dots, x_i}(J) = (\mathcal I_{t,x_1, \dots, x_{i+1}}(J) + \partial_{x_{i+1}}D_{t, x_1, \dots, x_{i+1}}) \cap D_{t, x_1, \dots, x_{i}}.
\end{equation}
In each iterative step, telescopers and their respective certificates are computed. 
We refer the reader to \cite[Sec. 5.4]{KauersDfinite}, \cite{chyzak:tel-01069831}, and most recently \cite{brochet2025fastermultivariateintegrationdmodules} for details on the various available creative telescoping algorithms.  
The final operator $P$ in \eqref{eq:picard--fuchs} is called a {\em{Picard--Fuchs operator}}, a non-zero linear differential operator in $D_t$ that annihilates periods of $A \in \mathbb{Q}(t,\mathbf{x})$ depending on $t$.

Now consider concave polynomials $f_1, \dots, f_k \in \mathbb Q[x_1, \dots, x_n]$. They define the convex, compact semi-algebraic set
\begin{equation}
    C  = \bigcap_{i=1}^k\{x \in \mathbb R^n \mid f_i(x) >0\}.
\end{equation}
Moreover, $C$ can be described as the limit (in the Hausdorff metric) of the $1$-parameter family of semi-algebraic sets
\begin{equation}\label{eq:deformed-intersection}
    C_t = \{x \in \mathbb R^n \mid \prod_i f_i(x) -t > 0 \} \cap C. 
\end{equation}
Since the deformed product
\begin{equation}\label{eq:deformed_product}
    F_t := f_1\cdot\ldots \cdot f_k -t \in \mathbb{Q}[t, \mathbf{x}]
\end{equation}
has nowhere vanishing Jacobian, it follows that its vanishing set $V(F_t) \subset \mathbb{R}^{n+1}$ is a  smooth variety. By Sard's theorem, also the slice $V(F_t) \cap \{t\}\times \mathbb{R}^n$ is smooth for all but finitely many values of $t$. This then also holds for any connected component of $V(F_t)$, such as the boundary of $C_t$ which we denote by $\partial C_t$. We will refer to $C_t$ as the (smooth) {\em deformation} of $C$. The following result is a direct application of Proposition~\ref{prop:slice_vol} below. 
\begin{proposition}
    \label{prop:slice_volumes_as_periods}
    Let $C_t$ be defined as in \eqref{eq:deformed-intersection}. Then 
    \begin{equation}
    \varphi : (0, \varepsilon) \to \mathbb{R}, \quad t\mapsto\vol(C_t)
\end{equation}
is a period of a rational integral depending on the parameter $t$ for small $\varepsilon>0$. Moreover, $\vol(C)$ is the limit as $t$ tends to $0$ of the analytic continuation of $\varphi(t)$.
\end{proposition}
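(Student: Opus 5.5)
The plan is to reduce to the single-polynomial lemma above, applied to the deformed product $F_t = f_1\cdots f_k - t$, and then pass to the limit. The first step is to identify $C_t$ with a connected component of the positivity locus of $F_t$. For $t>0$ small, every point with $F_t>0$ that lies in $C$ has all $f_i>0$. Because each $f_i$ is concave, the set $C_t=\{F_t>0\}\cap C$ is a connected piece of $\{F_t>0\}$, and its closure stays inside $C$. I would argue this via the log-concavity of $\prod f_i$ on $C$. Since each $\log f_i$ is concave on $\{f_i>0\}$, the function $\sum\log f_i$ is concave on $C$, so $C_t=\{\sum \log f_i > \log t\}\cap C$ is convex. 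Its boundary $\partial C_t$ is a compact connected component of $V(F_t)$ that avoids the hypersurfaces $V(f_i)$. For all but finitely many $t$, and in particular for all $t\in(0,\varepsilon)$ once $\varepsilon$ is below the smallest positive critical value, this component is smooth, by the Sard argument given before the statement.

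Next I would apply the argument of the lemma to the bounded region $C_t$ with boundary the smooth hypersurface $\partial C_t$. Stokes' theorem gives
\[
\vol(C_t)=\int_{C_t} dx=\int_{\partial C_t} x_1\,dx_2\wedge\cdots\wedge dx_n .
\]
Leray's residue theorem rewrites this as $\frac{1}{2\pi i}\int_{\Gamma_t} \frac{x_1\,\partial_{x_1}F_t}{F_t}\,dx$, where $\Gamma_t$ is a tube (Leray coboundary) around $\partial C_t$. The integrand $A(t,x)=x_1\partial_{x_1}(f_1\cdots f_k)/(f_1\cdots f_k - t)$ lies in $\mathbb{Q}(t,\mathbf x)$. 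Here the lemma is only used locally, on a component, which its proof permits since Stokes and Leray are local to that component.

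The key remaining point is the independence of $\Gamma$ from $t$ required by the definition. Near any $t_0\in(0,\varepsilon)$, the family $\partial C_t$ is a smooth family of compact hypersurfaces; by Ehresmann's theorem (no critical values in the interval) it is locally trivial. One can therefore choose a single tube $\Gamma$ around $\partial C_{t_0}$ lying in the complement of $V(F_t)$ for all $t$ in a small neighbourhood. Any two admissible tubes are homologous in that complement, and the residue depends only on the homology class. Hence $\varphi$ is a period of $A$ depending on $t$ in the sense of the definition. This uniformity is the step I expect to be the main obstacle: one must make sure the tube also avoids the other components of $V(F_t)$ and the complex points of $V(F_t)$ near $\partial C_t$ as $t$ varies. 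I would handle this with a compactness argument, taking the tube radius smaller than the distance from $\partial C_{t}$ to the remaining components of $V(F_t)\cap\mathbb C^n$, uniformly over a compact $t$-interval.

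Finally, for the limit, $\varphi$ is real-analytic on $(0,\varepsilon)$ and, by Theorem~\ref{thm:takayama}, satisfies a Picard--Fuchs equation, so it has an analytic continuation along paths avoiding singular points. On the real segment $(0,\varepsilon)$ the continuation coincides with $\vol(C_t)$. Since $C_t$ increases to $C$ as $t\downarrow 0$ (each $x\in C$ has $\prod f_i(x)>0$), monotone convergence gives $\vol(C_t)\to\vol(C)$. Therefore $\vol(C)=\lim_{t\to 0^+}\varphi(t)$, which is the limit of the analytic continuation.
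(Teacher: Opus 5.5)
Your proof is correct, but it takes a different route from the paper. The paper gives no separate argument. It calls the statement a direct application of Proposition~\ref{prop:slice_vol}: it treats $t$ as an extra coordinate on $\mathbb{R}^{n+1}$, notes that $V(\prod f_i - t)$ is smooth because the $t$-derivative is $-1$, and regards $\vol(C_t)$ as the volume of the slice $\{t=t_0\}$. The period property on an interval of adjacent critical values of the projection to $t$ is then imported from Lairez--Mezzarobba--Safey El Din. The limit claim rests only on the earlier remark that $C_t\to C$ in the Hausdorff metric.

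You instead re-derive the mechanism behind that theorem directly in the parameter $t$. You use Stokes and Leray at each fixed $t$, then Ehresmann plus a fixed tube to get a cycle that is locally independent of $t$, which is exactly what the paper's definition of a period depending on $t$ requires. The limit you obtain by monotone convergence, which is cleaner than the Hausdorff remark. The paper's route is shorter and reuses a published result. Yours is self-contained and shows precisely where log-concavity enters: it makes $\partial C_t$ a single compact smooth component of $V(F_t)$ that stays away from every $V(f_i)$. It also sidesteps a mismatch in the citation. The region $\{(t,\mathbf{x}) \mid \prod f_i(\mathbf{x}) > t\}\subset\mathbb{R}^{n+1}$ is connected and unbounded in the $t$-direction, so Proposition~\ref{prop:slice_vol} as stated (a union of bounded components) does not literally apply. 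Only its local-in-the-slice-parameter content is really used, and that is what you prove.

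One step should be phrased more carefully. The complex hypersurface $V(F_t)\subset\mathbb{C}^n$ contains a complex thickening of $\partial C_t$, so there are no ``remaining components'' at positive distance to keep the tube away from. What your compactness argument should deliver is the following: uniformly for $t$ near $t_0$, each complex normal disc of small radius $\delta$ centred at a point of $\partial C_{t_0}$ meets $V(F_t)$ transversally in exactly one point. That point is real by conjugation symmetry. This follows from the implicit function theorem and a lower bound on the gradient of $\prod f_i$ near $\partial C_{t_0}$. It shows that the fixed tube is a Leray coboundary of $\partial C_t$ itself, so the residue identity holds with a $t$-independent cycle.
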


This shows that the volume function $\varphi(t)$ is a solution of a Picard--Fuchs operator $P_t$ in $D_t$. In the above, $\varepsilon$ can be chosen to be the smallest positive singular value of $t$ for $P_t$. Locally, on simply connected regions away from the singular locus of $P_t$, the solutions of $P_t$ form a $\mathbb C$-vector space of dimension $\ord(P_t)$. Thus, our particular solution $\varphi(t)$ can be specified in this vector space by providing suitable initial conditions. By \cite[Lemma 15]{Lairez_volumes_semi_algebraic_sets}, the initial conditions can be of the following form:
\begin{lemma}\label{lemma:volct}
    The solution of $P_t$, which realizes $\varphi(t) = \vol(C_t)$ on an open interval $(0,\varepsilon)$, can be uniquely determined by providing the value of $\vol(C_t)$ at $\ord(P_t)$ many suitable points $t \in (0,\varepsilon)$.
\end{lemma}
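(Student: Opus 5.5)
The argument rests on the finite-dimensionality of the local solution space of $P_t$ together with the fact that its solutions are real-analytic on $(0,\varepsilon)$. Put $r = \ord(P_t)$. Since $\varepsilon$ is at most the smallest positive singular point of $P_t$, the leading coefficient $p_r$ has no zero on $(0,\varepsilon)$. By the Cauchy existence theorem for linear ODEs with analytic coefficients, the solutions of $P_t$ on $(0,\varepsilon)$ therefore form an $r$-dimensional $\mathbb{C}$-vector space $S$ of analytic functions. Fix a basis $y_1,\dots,y_r$, for instance the one normalized at some base point. Proposition~\ref{prop:slice_volumes_as_periods} shows that $\varphi(t)=\vol(C_t)$ is annihilated by $P_t$, so $\varphi = \sum_j c_j y_j$ for unique constants $c_j \in \mathbb{C}$. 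Determining $\varphi$ thus amounts to determining the vector $c = (c_1,\dots,c_r)$.

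For points $t_1,\dots,t_r \in (0,\varepsilon)$, the values $\vol(C_{t_i})$ give the linear system $\sum_j c_j y_j(t_i) = \vol(C_{t_i})$ with matrix $M(t_1,\dots,t_r) = (y_j(t_i))_{i,j}$. The solution is uniquely determined exactly when $\det M \neq 0$, so the whole task is to show that suitable points with nonzero determinant exist. The main step is to show that $g(t_1,\dots,t_r) := \det M$ is not identically zero on $(0,\varepsilon)^r$. I would argue by induction on $r$ via cofactor expansion along the last row. Assume $t_1,\dots,t_{r-1}$ are chosen so that the minor formed from $y_1,\dots,y_{r-1}$ is nonzero. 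Then $t\mapsto g(t_1,\dots,t_{r-1},t)$ is a linear combination of $y_1,\dots,y_r$ in which the coefficient of $y_r$ is that nonzero minor. Because the $y_j$ are linearly independent, this combination is a nonzero element of $S$. Hence it vanishes at most on a discrete subset of $(0,\varepsilon)$, since it is analytic and nonzero on a connected interval. So $t_r$ can be chosen, and the base case $r=1$ is immediate.

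The inductive argument shows more: the set of bad tuples, on which $g$ vanishes, is the zero set of a nonzero real-analytic function on $(0,\varepsilon)^r$. This set has measure zero, so generic choices of points are suitable, which is the sense of ``suitable'' in the statement. The only subtle point is that linear independence over $\mathbb{C}$ as functions must be turned into the statement that a nonzero solution cannot vanish identically on the interval. This follows from the analytic identity theorem, because $(0,\varepsilon)$ contains no singular point of $P_t$. For the real-valued $\varphi$, one can work with a real basis of $S$ and real constants $c_j$; the same argument goes through unchanged.
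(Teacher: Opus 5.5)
Your argument is correct and follows essentially the route the paper takes. The paper cites Lemma 15 of Lairez, Mezzarobba and Safey El Din and defines ``suitable'' as the condition that evaluating the $\ord(P_t)$-dimensional solution space at the chosen points (transported along $(0,\varepsilon)$ by analytic continuation) gives an invertible linear system, which is exactly your $\det M \neq 0$. Your determinant induction and measure-zero remark go slightly further: they prove that such points exist and are generic, which the paper only asserts when it says randomly sampled values are suitable with probability $1$.
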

Once $\varphi$ is determined within the solution space of $P_t$ on $(0,\varepsilon)$, the volume of $C$ is then obtained by analytically continuing $\varphi$ to $t=0$. The word "suitable" in Lemma \ref{lemma:volct} has to be understood as follows. Along a path $\gamma: t_0 \to t_1$ outside the singular locus of a linear differential operator $P$, analytic continuation provides an isomorphism of the $\mathbb C$-vector space of solutions $\operatorname{Sol}_{t_0}$ at $t_0$, and $\operatorname{Sol}_{t_1}$ at $t_1$. 
This isomorphism can be described in a given basis and can be computed numerically, for example using high-precision solvers such as implemented in the \texttt{ore\_algebra} package \cite{Mezzarobba2016} up to arbitrary precision. Providing the value $\vol(C_t)$ at a point $t_i$ fixes only a single coordinate in $\operatorname{Sol}_{t_i}$. The $\ord(P_t)$ many values for $t$ are then suitable if together they determine, namely as a linear system, all coordinates in $\operatorname{Sol}_{t_0}$ uniquely.

Let us now describe how the values of $\vol(C_t)$ can be determined recursively as volumes of lower-dimensional semi-algebraic sets. 
By a {\em slice} of $C_t$, we mean the intersection of $C_t$ with a hyperplane. 
Since the boundary of a slice of a smooth set is also smooth, a more general statement can be made about slices of $C_t$ and their volumes.
\begin{proposition}[{\cite[Thm. 9]{Lairez_volumes_semi_algebraic_sets}}]\label{prop:slice_vol}
    If $f \in \mathbb{Q}[x_1,\ldots,x_n]$ such that the boundary of $\mathcal C = \{x \in \mathbb R^n \mid f(x)>0\}$ is smooth, then for $C$ a union of bounded, connected components of $\mathcal C$,
    the volume of the slice 
    \begin{equation*}
        \varphi(v) = \vol(C \cap \{x \in \mathbb R^n \mid x_i=v\})
    \end{equation*}is a period of the rational function
\begin{equation}\label{eq:rational_function}
    A = \frac{(\partial_{x_j}\bullet f(x_i = v))x_j}{f(x_i = v)}, \quad \text{for any}\quad i\not = j,
\end{equation}
on any open interval of adjacent critical values of the projection
\begin{equation}\label{eq:proj}
    \pr_{x_i} : \partial C \rightarrow \mathbb{R}, \qquad \mathbf{x} \mapsto x_i.
\end{equation}
\end{proposition}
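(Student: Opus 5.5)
The plan is to reduce the statement to the single-polynomial Lemma at the start of this section, applied fibrewise, and then use Ehresmann's fibration theorem to see that the integration cycle can be chosen locally constant in $v$.

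\textbf{Step 1: the volume of a fixed slice.} Fix $v$ in an open interval $I$ between adjacent critical values of $\pr_{x_i}$. Put $g_v(\hat x) := f(x_i=v)$, a polynomial in the remaining $n-1$ variables $\hat x$. The slice is
\begin{equation*}
C\cap\{x_i=v\} = \{\hat x \mid g_v(\hat x)>0\}\cap(\text{some components}).
\end{equation*}
Since $v$ is not a critical value of $\pr_{x_i}$ on $\partial C$, the hypersurface $\partial C$ meets $\{x_i=v\}$ transversally. Hence the boundary of the slice is a smooth compact hypersurface in $\mathbb R^{n-1}$, and it is a union of components of $V(g_v)\cap\mathbb R^{n-1}$.

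We then apply the Lemma at the start of this section to the bounded region cut out by $g_v$. Strictly, that Lemma is stated for the full positivity set, so we use its proof instead. Write
\begin{equation*}
\vol = \int_{\text{slice}} d\hat x = \int_{\partial(\text{slice})} x_j\,\omega ,
\end{equation*}
where the equality is Stokes' theorem and $\omega$ is the Poincaré--Leray residue form. This identifies the boundary integral with the residue of $A\,d\hat x$ along $V(g_v)$, with $A$ as in \eqref{eq:rational_function}. Leray's residue theorem then gives
\begin{equation*}
\varphi(v) = \frac{1}{2\pi i}\int_{\Gamma_v} A(v,\hat x)\, d\hat x ,
\end{equation*}
where $\Gamma_v$ is the Leray coboundary (a tube) of the real cycle $\partial C\cap\{x_i=v\}$ inside $\mathbb C^{n-1}\setminus V(g_v)$.

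\textbf{Step 2: independence of the cycle from $v$.} This is the main obstacle, because the definition of a period requires $\Gamma$ to be independent of $t=v$ locally. Over $I$, the map $\pr_{x_i}$ restricted to $\partial C\cap \pr_{x_i}^{-1}(I)$ is a proper submersion, since $C$ is bounded and $I$ contains no critical values. By Ehresmann's theorem it is a locally trivial fibration, so the real cycles $\partial C\cap\{x_i=v\}$ form a locally constant family.

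We need the same for the tubes $\Gamma_v$. Near a point $v_0\in I$, a small real tube of radius $\delta$ around the compact real cycle stays inside $\mathbb C^{n-1}\setminus V(g_v)$ for all $v$ close to $v_0$. This holds because $V(g_v)$ varies continuously and the tube is at positive distance from it. Trivializing the fibration near $v_0$, we take one tube $\Gamma$ around the cycle at $v_0$ and use it for all nearby $v$. The two tubes are homologous in the complement, because the isotopy produced by the trivialization is small. By Stokes' theorem the integral depends only on the homology class, since the form $A\,d\hat x$ is closed (it is a holomorphic top-degree form). So near $v_0$ we may take $\Gamma$ fixed, as the definition requires.

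\textbf{Step 3: the choice of $j$.} Independence of $j\neq i$ follows as in the remark after the first Lemma. The forms
\begin{equation*}
\frac{x_j\,\partial_{x_j}\bullet g}{g}\,d\hat x
\end{equation*}
for different $j$ differ by an exact form on the complement of $V(g_v)$. Indeed, $x_j\,\partial_j g/g$ equals $\partial_j(x_j\log g)$ minus $\log g$, and the resulting difference of residues is the boundary integral of an exact form, which vanishes.

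The subtle point is Step 2. Care is needed that the tube avoids the complex points of $V(g_v)$ uniformly in $v$, which is where compactness of the real cycle and absence of critical values in $I$ enter.
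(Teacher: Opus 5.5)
Your proposal is correct and follows essentially the argument behind the cited theorem of Lairez, Mezzarobba and Safey El Din; the paper itself only cites it, after sketching Stokes plus Leray's residue theorem for the first Lemma. The argument is: apply Stokes and the residue theorem on each slice, then use Ehresmann's theorem on the proper submersion $\partial C\cap\pr_{x_i}^{-1}(I)\to I$, together with a compactness estimate, to keep one tube valid for all nearby $v$.

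The only blemish is Step 3. There $\log g$ is multivalued on the tube, so the manipulation does not make sense as written. The step is also unnecessary: the Stokes identity in Step 1 already expresses the slice volume as $\pm$ the boundary integral of $x_j\bigwedge_{k\neq i,j}dx_k$ for every $j\neq i$.
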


Let $(c_1,c_2) \subset \mathbb{R}$ be such an interval of adjacent critical values for $C_t$. The volume of $C_t$ over this interval can be computed by evaluating the integral
\begin{equation}
    \psi(s) = \int^{s}_{c_1}\vol(C_t \cap \{x \in \mathbb R^n \mid x_i=v\})dv
\end{equation}
at $s=c_2$. By Proposition~\ref{prop:slice_vol}, the integrand is annihilated by a Picard--Fuchs operator $P \in D_{x_i}$. Therefore, by the fundamental theorem of calculus, it follows that
\begin{equation}
    P\partial_{x_i} \bullet \psi = 0.
\end{equation} 
As before for $\varphi$, one can solve for $\psi$ by providing suitable initial conditions, this time in terms of the values of $\psi'(s)$, which is the volume of the slice over $x_i =s$.

This leads to an algorithm that computes the volume integral \eqref{eq:vol} as a solution of a Picard--Fuchs operator. Since solving the Picard--Fuchs operator requires initial conditions which are volume computations of lower-dimensional slices, the algorithm is recursive with depth $n$. Algorithm~\ref{alg:volume2} takes as input $k$ concave polynomials and returns the volume of $C$ to a chosen precision. It implicitly calls Algorithm~\ref{alg:volume1}, a recursive algorithm that computes the volume of the deformed convex body. Algorithms~\ref{alg:volume2} and~\ref{alg:volume1} were presented in more general form in \cite{Lairez_volumes_semi_algebraic_sets}.
The two algorithms make use of several subroutines, which will only be shortly described in this section. Subroutines (2), (4) and (5) are explained further in Section~\ref{sec:adaptation}.
\begin{enumerate}
    \item \texttt{CreativeTelescoping}$(I, x_i)$: Returns an element $P$ of the integration ideal of $I$ that integrates out all variables but $x_i$, using a suitable creative telescoping algorithm. See also Section~\ref{sec:performance}.
\end{enumerate}
\begin{enumerate}
    \setcounter{enumi}{1}
    \item \texttt{SuitableValues}$(P, (a,b))$: Returns a list of suitable values for $x_i \in (a,b)$ to uniquely determine a solution of $P$ on $(a,b)$. See also Section \ref{sec:adaptation}. 
\end{enumerate}
\begin{enumerate}
    \setcounter{enumi}{2}
    \item \texttt{Solve}($P$, $L_{ic},x_i = v$): Determines the solution $\varphi$ of $P$ by the list of initial conditions $L_{ic}$ and returns its value $\varphi(x_i = v)$ for a value $v \in [a,b]$.
\end{enumerate}
 \begin{enumerate}
    \setcounter{enumi}{3}
     \item $\texttt{CriticalValues}((f_1,\ldots, f_k), t_\val, x_{\pr})$: Returns the critical values of the projection from $\partial C_{t_\val}$ onto the $x_\pr$ axis. 
     We will prove in Proposition~\ref{prop:deformed-intersection} that for the class of concave polynomials, the deformation $C_{t_\val}$ is convex. Hence, there will only be two such critical values.
     How to obtain them is described in Algorithm~\ref{alg:critical_values}.
 \end{enumerate}
 \begin{enumerate}
    \setcounter{enumi}{4}
     \item $\texttt{1DimVolume}(L_{\operatorname{res}}, t_\val, N)$: Returns the $1$-dimensional volume of the deformed intersection, assuming that the restricted polynomials in $L_{\operatorname{res}}$ are univariate, see also Section~\ref{sec:adaptation}.
 \end{enumerate}
 \begin{enumerate}
    \setcounter{enumi}{5}
     \item $\texttt{ProjectionVariable}((f_1,\ldots,f_k), t_{\val})$: Selects one of the available variables to project onto next. This becomes relevant in Section~\ref{sec:performance}.
 \end{enumerate}

\begin{algorithm}[H]
    \caption{\texttt{VolumeSemialgebraic}} \label{alg:volume2}
    \begin{algorithmic} 
    \smallskip
        \Require Concave polynomials $f_1, \ldots, f_k \in \mathbb Q [x_1, \dots, x_n]$, precision $N \in \mathbb N$.
        \Ensure Volume of the convex body $C$ up to $N$ binary digits.
        \State $F_t:= \prod_i f_i - t$
        \State $A_t := \frac{(\partial_{1}\bullet F_t)x_1}{F_t}$
        \State $I_t := \langle A_t \partial_\alpha - \partial_\alpha \bullet A_t \mid \alpha \in \{x_1,...,x_n,t\} \rangle$ 
        \State $P_t :=  \texttt{CreativeTelescoping}(I_t, t)$
        \State $\varepsilon := \min \{|t| \mid t \in \texttt{SingLoc}(P_t)\setminus\{0\}\}$
        \State $L_t :=\texttt{SuitableValues}(P_t, (0,\varepsilon))$
        \State $L_{ic}:=$ empty \texttt{list} of \texttt{InitialConditions} 
        \For{$t_{\val} \in L_t$}
            \State $\texttt{append}(L_{ic}, \texttt{SmoothVolume}((f_1,\ldots, f_k), t_{\val}, N))$
        \EndFor
        \State $\operatorname{vol_0} := \texttt{Solve}(P_t, L_{ic},t = 0)$
        \State \Return $\operatorname{vol_0}$
    \end{algorithmic}
\end{algorithm}

\begin{algorithm}[ht]
    \caption{\texttt{SmoothVolume}} \label{alg:volume1}
    \begin{algorithmic} 
    \smallskip
        \Require Concave polynomials $(f_1,\ldots, f_k)$ in $\mathbb{Q}[x_1,\ldots, x_n]$, deformation value $t_{\val} \in \mathbb Q$, number of precision bits $N \in \mathbb N$.
        \Ensure Volume of the deformation $C_{t_\val}$ up to $N$ binary digits.
        \State $\operatorname{x_{\pr}} := \texttt{ProjectionVariable}((f_1,\ldots,f_k), t_{\val})$
        \State $F_{t_{\val}} := \prod f_i - t_{\val}, \quad A := \frac{(\partial_{x_i}\bullet F_{t_{\val}})x_i}{F_{t_{\val}}}$ for $x_i \neq x_\pr$ 

        \State $I := \langle A \partial_i - \partial_i \bullet A \mid 1 \leq i \leq n \rangle$
        \State $P := \texttt{CreativeTelescoping}(I, x_{\pr})$
        \State $c_1,c_2 := \texttt{CriticalValues}((f_1,\ldots, f_k), t_\val, x_{\pr})$
        \State $L := \texttt{SuitableValues}(P, (c_1,c_2))$
        \State $L_{ic}:=$ empty \texttt{list} of \texttt{InitialConditions} 
        \For{$x_{\val} \in L$}
            \State $L_{\operatorname{res}}:=(f_1,\ldots, f_k)(x_\pr = x_{\val})$
            \If{$n>2$}
                \State $\texttt{append}(L_{ic}, \varphi'(x_{\val})=\texttt{SmoothVolume}(L_{ \operatorname{res}}, t_{\val},N))$
            \Else $(n=2)$
                \State append$(L_{ic}, \varphi'(x_{\val}) = \texttt{1DimVolume}(L_{\operatorname{res}}, t_\val, N))$
            \EndIf
        \EndFor  
        \State \Return 
        \texttt{Solve}($P\partial_{x_\pr}$, $L_{ic},x_\pr = c_2$)
    \end{algorithmic}
\end{algorithm}

\section{Convex Sets and Their Deformations}\label{sec:adaptation}

A polynomial $f \in \mathbb{Q}[\mathbf{x}]$ is a \textit{concave function} on a convex set $U \subseteq \mathbb R^n$ if for any $x, y \in U$ and $\alpha \in [0,1]$
\begin{equation}\label{eqdef:concave}
    f\big(\alpha x + (1 - \alpha)y\big) \geq \alpha f(x) + (1-\alpha)f(y).
\end{equation}

\begin{lemma}\label{lemma:superlevelsets}
    Let $U \subset \mathbb{R}^n$ be a convex region and $f: U \rightarrow \mathbb{R}$ a concave function on that region. Then for any $t$, the super-levelset
    \begin{equation}
        f_{>t} := \{x \in \mathbb{R}^n \mid f(x) > t\}
    \end{equation}
    is a convex subset of $\mathbb{R}^n$.
\end{lemma}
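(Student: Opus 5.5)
The plan is to check the definition of convexity directly, using nothing beyond the concavity inequality \eqref{eqdef:concave}. One point needs care first. The statement writes the super-levelset as a subset of $\mathbb{R}^n$, but $f$ is only defined on $U$. I will therefore read it as $f_{>t} = \{x \in U \mid f(x) > t\}$, which is automatically the case when $f$ is a polynomial that is concave on all of $U=\mathbb{R}^n$. The convexity of $U$ is exactly what makes \eqref{eqdef:concave} meaningful: for $x,y\in U$, every point $\alpha x+(1-\alpha)y$ again lies in $U$, so $f$ can be evaluated there.

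The argument proceeds as follows.
\begin{enumerate}
\item Fix $t \in \mathbb{R}$, points $x, y \in f_{>t}$, and $\alpha \in [0,1]$. Set $z = \alpha x + (1-\alpha) y$. Since $U$ is convex, $z \in U$.
\item Apply \eqref{eqdef:concave} to obtain
\[
f(z) \geq \alpha f(x) + (1-\alpha) f(y).
\]
\item Since $f(x) > t$ and $f(y) > t$, and $\alpha, 1-\alpha \ge 0$ with $\alpha + (1-\alpha) = 1$, the right-hand side is a convex combination of two numbers each exceeding $t$, so it is strictly greater than $t$.
\item Conclude $f(z) > t$, so $z \in f_{>t}$. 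This is the definition of convexity of $f_{>t}$.
\end{enumerate}

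Step 3 is the only place where strictness could fail, so it deserves a check. If $\alpha = 0$, the combination equals $f(y) > t$. If $\alpha = 1$, it equals $f(x) > t$. For $\alpha \in (0,1)$, both weighted terms strictly exceed $\alpha t$ and $(1-\alpha)t$ respectively, so their sum strictly exceeds $t$. In every case the inequality is strict.

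Finally, the empty set and singletons are trivially convex, so no degenerate cases remain. The same argument applies verbatim to the non-strict superlevel sets $\{f \ge t\}$.
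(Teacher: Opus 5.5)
Your proof is correct and follows the same route as the paper: apply the concavity inequality at a point on the segment between two points of $f_{>t}$, and observe that a convex combination of two values exceeding $t$ itself exceeds $t$. Reading $f_{>t}$ as $\{x \in U \mid f(x) > t\}$ is a sensible correction, since the statement as written places the set in $\mathbb{R}^n$ while $f$ is only defined on $U$.
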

\begin{proof}
    This is a consequence of the definition of concavity. If for $x,y \in U$, both $f(x)>t$ and $f(y)>t$, then this holds by definition for any point along the line segment connecting $x$ and $y$.
\end{proof}
Let now $f_1,\ldots, f_k \in \mathbb{Q}[\mathbf{x}]$ be a list of concave polynomials. Then their common positivity locus $C$ is a convex subset of $\mathbb{R}^n$, where
\begin{equation}\label{eq:setupC}
    C = \bigcap^k_{i=1}\{x\in \mathbb R^n \mid f_i(x) > 0\}.
\end{equation}

\begin{example} 
    Fix an even integer $p \in 2\mathbb{N}_{>0}$. 
    Consider the unit $\ell_p$-ball in $\mathbb R^n$ centered at $\mu$. 
    It is a semi-algebraic set given by the concave polynomial
\begin{equation}
    f_{\ell_p, \mu} = 1- ((x_1-\mu_1)^p + \cdots + (x_n-\mu_n)^p).
\end{equation}
    We denote the corresponding translated $\ell_p$-ball by $C_{\ell_p,\mu} := {(f_{\ell_p, \mu})}_{>0}$.
    Given $\mu_1$ and $\mu_2$, if $C:=C_{\ell_p,\mu_1} \cap C_{\ell_p, \mu_2} \neq \emptyset$, 
    then $C$ is one out of two connected components of
    $({f_{\ell_p, \mu_1}f_{\ell_p, \mu_2}})_{> 0}$.
    Figure~\ref{fig:deformation} shows the deformation of two $\ell_4$-balls in $\mathbb R^2$ centered at $(0,0)$ and $(1/2, 1/3)$. The deformed product $C_t \subsetneq (F_t)_{> 0}$ is then defined by \begin{gather}
        F_t = (1 - x^4 - y^4) (1 - (x-1/2)^4 - (y-1/3)^4)) - t.
    \end{gather}
    Figure~\ref{fig:deformation}(a) shows $V(F_t(t=0))$ and \ref{fig:deformation}(b) shows $V(F_t(t = 0.3))$.

    \begin{figure}%
    \centering
    \subfloat[\centering $t = 0$]{{\includegraphics[width=4cm]{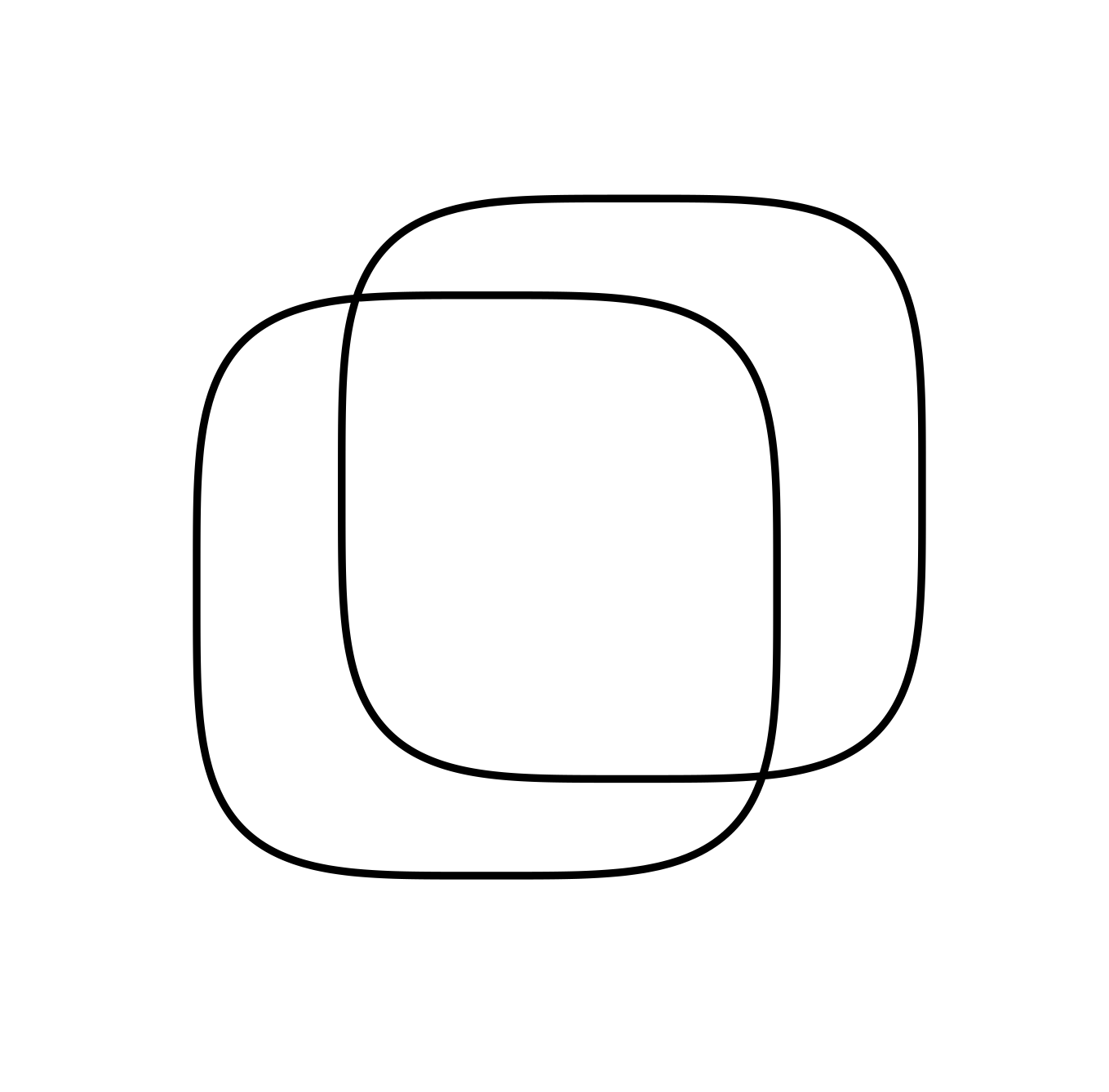} }}%
    \quad
    \subfloat[\centering $t = 0.3$]{{\includegraphics[width=4cm]{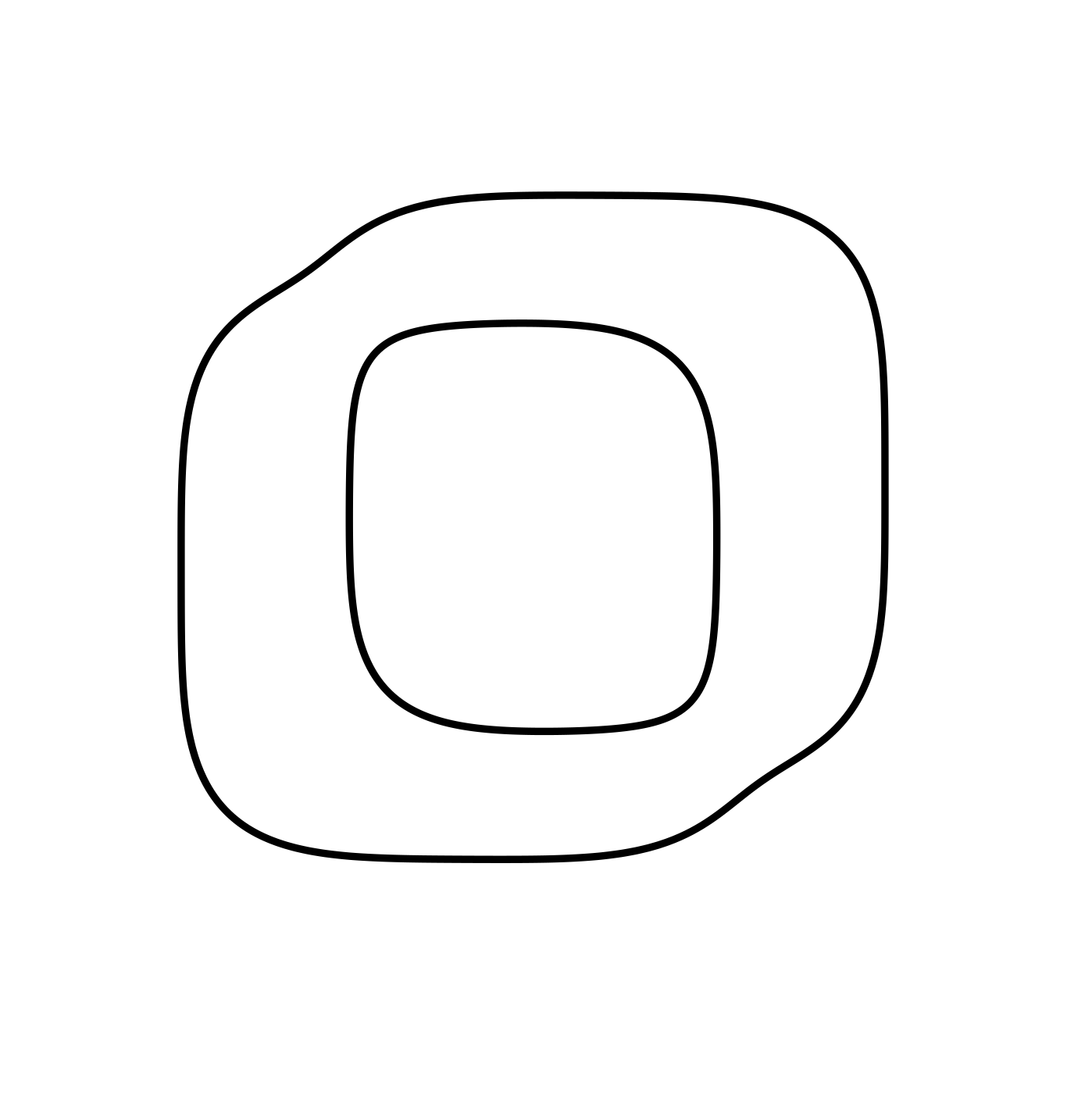} }}%
    \caption{Two $\ell_4$-balls deformed by a parameter $t$}%
    \label{fig:deformation}%
\end{figure}
\end{example}
Let us now study the deformed intersection $C_t$. 
For general polynomials, even though for small $t$ the deformation $C_t$ is connected, its slices
may have multiple components, as in the following example.
\begin{example} 
Consider the semi-algebraic set $C \subset \mathbb{R}^2$ defined by the common positivity locus of
\begin{equation*}
    f := -(y^2-(x+1)x^2 -\frac{1}{2})
\end{equation*}
and the three affine linear polynomials
\begin{equation*}
    l_1 := -(x - 2), \quad l_2 := (y + \frac{1}{2}), \quad l_3 := -(y - \frac{1}{2}).
\end{equation*}
In this case, the semi-algebraic set $C$, depicted in gray in Figure \ref{fig:guitar}(a), is convex. However, its deformation $C_t$, seen in gray in Figure \ref{fig:guitar}(b) for $t=0.2$ is not. In particular, there exists some value $v$ for which the slice $C_t \cap \{y =v\}$ has two connected components.
\begin{figure}%
    \centering
    \subfloat[\centering $t = 0$]{{\includegraphics[width=4cm]{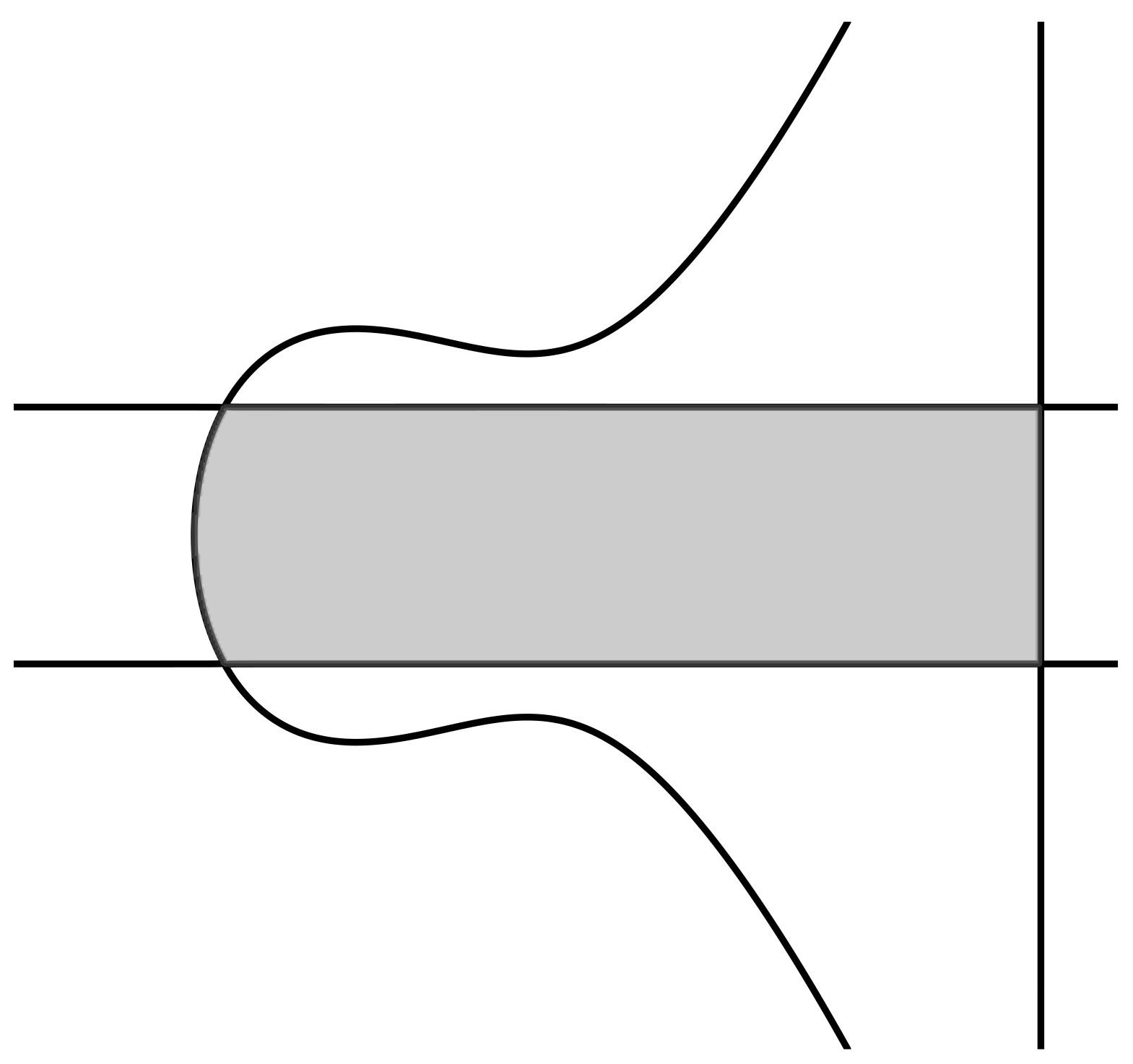} }}%
    \;
    \subfloat[\centering $t = 0.2$]{{\includegraphics[width=4cm]{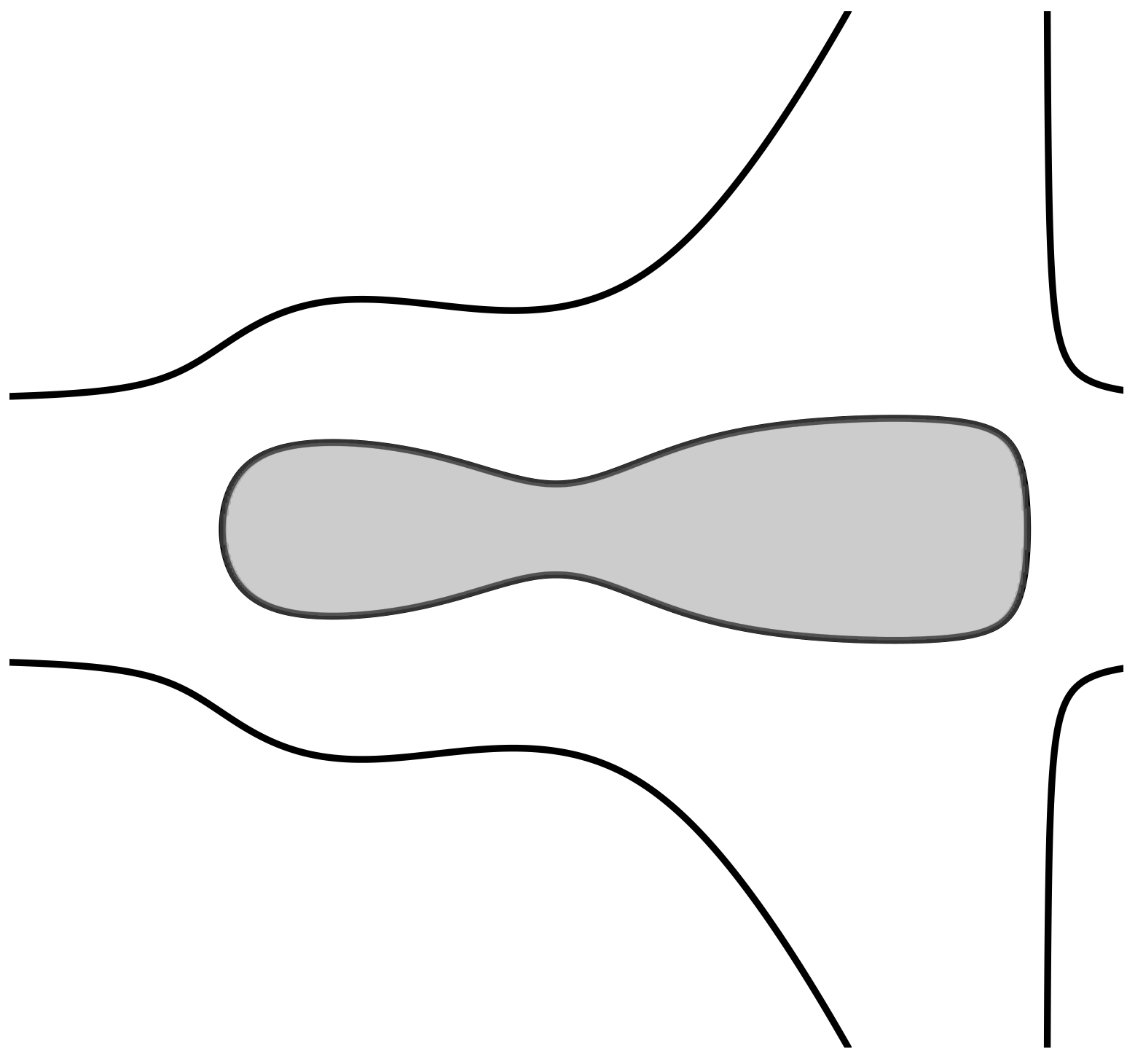} }}%
    \caption{Convex set with non-convex deformation}%
    \label{fig:guitar}%
\end{figure}
\end{example}
For concave polynomials, we can guarantee that the deformation $C_t$, and hence all its slices, are convex for all $t$.
\begin{proposition}\label{prop:deformed-intersection}
    Let $C$ be the common positivity locus of concave polynomials $f_1,\ldots,f_k \in \mathbb{Q}[\mathbf x]$. Then,
    its deformation $C_t$ as defined in \eqref{eq:deformed-intersection} is a convex set for all $t>0$.
\end{proposition}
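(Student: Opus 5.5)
The plan is to rewrite $C_t$ as a super-level set of a single concave function on the convex set $C$, and then apply Lemma~\ref{lemma:superlevelsets}. Directly, $C_t = \{x \in C \mid \prod_i f_i(x) > t\}$. A product of concave functions need not be concave, even where the factors are positive. So we pass to logarithms. On $C$ every $f_i$ is strictly positive, so
\begin{equation*}
    g(x) := \sum_{i=1}^k \log f_i(x)
\end{equation*}
is well defined on $C$. For $t>0$ we then have $C_t = \{x \in C \mid g(x) > \log t\}$.

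Next we show that $g$ is concave on the convex set $C$. By the remark after \eqref{eq:class}, $C$ is convex, being an intersection of the convex sets $(f_i)_{>0}$ from Lemma~\ref{lemma:superlevelsets}. For each $i$, the map $\log f_i$ is concave on $C$: $\log$ is concave and nondecreasing on $(0,\infty)$, and $f_i$ is concave and positive there. Concretely, for $x,y\in C$ and $\alpha\in[0,1]$,
\begin{equation*}
    \log f_i(\alpha x+(1-\alpha)y) \ge \log\big(\alpha f_i(x)+(1-\alpha)f_i(y)\big) \ge \alpha\log f_i(x)+(1-\alpha)\log f_i(y).
\end{equation*}
The first inequality uses \eqref{eqdef:concave} together with monotonicity of $\log$. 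The second is concavity of $\log$. A sum of concave functions is concave, so $g$ is concave on $C$.

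Finally, apply Lemma~\ref{lemma:superlevelsets} with $U=C$, $f=g$ and threshold $\log t$. This shows that $g_{>\log t}$ is convex, that is, $C_t$ is convex. Here the lemma is read with $g$ defined only on $U$, so the super-level set is taken inside $U$; its proof goes through verbatim because $U$ is convex. The only real obstacle is that $\prod_i f_i$ itself need not be concave. The logarithmic (log-concavity) trick is what resolves this, and it is exactly where the hypothesis $t>0$ enters: for $t>0$ the set $C_t$ lies inside $C$, where the logarithm is defined. This is also consistent with the guitar example, where the factors are not all concave.
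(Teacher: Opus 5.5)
Your proof is correct and follows essentially the same route as the paper: you show that $\prod_i f_i$ is log-concave on the convex set $C$ because $\log\prod_i f_i=\sum_i\log f_i$ is a sum of concave functions, and then apply Lemma~\ref{lemma:superlevelsets} at the threshold $\log t$. You also spell out, via monotonicity and concavity of $\log$, why each positive concave $f_i$ is log-concave on $C$, a step the paper simply calls automatic.
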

\begin{proof}
    Recall that a positive function $f$ is defined to be log-concave if $\log(f)$ is also concave. Observe then that Lemma~\ref{lemma:superlevelsets} also holds for log-concave functions since the logarithm is a strictly monotonic function.
    
    In our case, the deformations $C_t$ are the super-levelsets of the product $F:=\prod^k_{i=1} f_i$ when restricted to $C$. But $C$ is the common positivity locus of all $f_i$, where automatically each of the $f_i$ is log-concave.
    Then, log-concavity of $F$ follows since 
    \begin{equation*}
        \log(F) = \log(f_1) + \ldots + \log(f_k)
    \end{equation*}
    and since sums of concave functions are concave by the defining Equation~\eqref{eqdef:concave}.
\end{proof}

It naturally follows from this observation that for any $x_i$, the projection of the boundary of the deformed intersection $\partial C_t$ onto the $x_i$-axis has exactly two critical values, namely the minimum and maximum of $x_i$ attained on $C_t$. Moreover, Proposition~\ref{prop:deformed-intersection} and this conclusion hold for any non-empty slice obtained by transverse intersection with an affine hyperplane since concave functions are naturally preserved under restriction to affine-linear subspaces.

Consider now the projection onto the $x_i$-axis. The map
\begin{equation}
    \pr_{x_i} : \partial C_t \rightarrow \mathbb{R}, \quad \mathbf{x} \to x_i
\end{equation}
 has precisely two critical values $c_1 < c_2 \in \mathbb{R}$ and the function 
\begin{equation}
    \varphi: \mathbb{R} \rightarrow \mathbb{R}, \quad v \to \vol(C_{t, x_i = v})
\end{equation}
is a period of a rational function when $\varphi$ is restricted to the open interval $(c_1, c_2) \subset \mathbb{R}$.

Let us now describe how the implementation of the routines \texttt{CriticalValues} and \texttt{1DimVolume} profits from the setting of concave polynomials, followed by a note on \texttt{SuitableValues}.

\paragraph{\texttt{CriticalValues}}
The two critical values coming from $\partial C_t$ can
be identified among the critical values coming from the projection
of the vanishing set of $F_t$ in the following way. We suppose that $t \in \mathbb{Q}_{>0}$ is fixed, so that $F_t \in \mathbb{Q}[\mathbf{x}]$. Then,  
the locus of critical points $\critPts$ of the projection of the algebraic set $V(F_t) \subset \mathbb{R}^n$ is the vanishing set of the ideal 
\begin{align}
    I = \langle \tilde{F_t}, \partial_{x_1}\bullet \tilde{F_t}, \dots, \widehat{\partial_{x_i}\bullet \tilde{F_t}}, \ldots \partial_{x_n}\bullet \tilde{F_t} \rangle
\end{align}
where $\tilde{F_t}$ denotes the square-free part of $F_t$ and where
the derivative with respect to $x_i$ is omitted. The critical values $\critVals_i$ of the projection of $F_t$ onto the $x_i$-axis are then obtained by elimination. The two relevant critical values corresponding to the projection of $\partial C_t$ can be identified in two possible ways:

\paragraph{Case 1 $(\dim(\critPts) = 0)$:} In this case, there are only finitely many points in the locus of critical points. We check which of them lie in $C$ by evaluating each of the $f_i$'s. By the convexity of $C_t$ shown in Proposition~\ref{prop:deformed-intersection} and since $\dim(\critPts) = 0$, we know that there will be exactly two such critical points. By projecting them back onto the $x_i$-axis, we have found the two relevant critical values.

\paragraph{Case 2 $(\operatorname{dim}(\critPts)>0)$:} 
In this case, we employ the subroutine \texttt{SamplePointsHypersurfaceRegions($p_{\text{cv}}\cdot F_t$)} where $p_{\text{cv}}$ generates the elimination ideal of $I$ with respect to all variables except $x_i$. This subroutine samples one point from each region in the complement of the hypersurface arrangement 
\begin{equation}
      V(F_t) \bigcup_{s \in \critVals_i}\{x \in \mathbb R^n \mid x_i = s\}.
\end{equation} Then, we identify the sample points that lie in the deformed intersection $C_t$ and project them onto the $x_i$-axis. This set of projected values defines a closed interval $[a,b]$. Then, the points in $\critVals_i$ that are closest to, but outside, on either side of $[a,b]$ are the critical values of the projection of $C_t$ onto the $x_i$-axis. 

\begin{algorithm}[ht]
    \caption{\texttt{CriticalValues}} \label{alg:critical_values}
    \begin{algorithmic} 
    \smallskip
        \Require Concave polynomials $f_1,\ldots, f_k \in \mathbb{Q}[\mathbf{x}]$, projection variable $x_i$, deformation value $t_{\val} \in \mathbb{Q}_{>0}$.
        \Ensure Critical values $c_1,c_2$ of the projection $\pr_{x_i} : \partial C_{t_{\val}} \rightarrow \mathbb{R}$.

        \State $f$ := \texttt{SquareFree}($\prod f_i -t_{val}$)
        \State $I$ := $\langle f, \partial_1\bullet f, \ldots, \widehat{\partial_i\bullet f}, \ldots \partial_n \bullet f\rangle$
        \If{$\dim(I) = 0$}
            \State $\critPts := \texttt{msolve}(I)$
            \State $a,b = \operatorname{Pts}\, \cap\, C$
            \State $c_1 ,c_2 = \operatorname{pr_{x_i}}(a,b)$
        \Else
            \State $\langle p_{\text{cv}} \rangle$ = \texttt{Eliminate}($I$, $x_1,\ldots, \widehat{x_i}, \ldots x_n)$
            \State CritVals := \texttt{Roots}($p_{\text{cv}}$, $\mathbb{R}$)
            \State pts := \texttt{SamplePointsHypersurfaceRegions}($p_{\text{cv}}\cdot f$))
            \State $c_1$ := $\texttt{max}\{p \in \text{CritVals} \mid p < \texttt{min}(\pr_{x_i}(\text{pts }\cap C_{t_{val}}))\}$
            \State $c_2$ := $\texttt{min}\{p \in \text{CritVals} \mid p > \texttt{max}(\pr_{x_i}(\text{pts}\cap C_{t_{val}}))\}$
        \EndIf
        \State \Return $\{c_1,c_2\}$
    \end{algorithmic}
\end{algorithm}

\begin{example}[Hypersurface regions]\label{eg:hypersurfregions}
    Let us provide an example of encountering a positive-dimensional ideal when computing the critical values of a projection. Consider the polynomials \begin{align*}
        f_1 = 1 - (x^2 + y^2 + z^2 + w^2) \\
        f_2 = 1 - ((x-1)^2 + y^2 + z^2 + w^2),
    \end{align*}
    defining two Euclidean balls and their intersection $C$.
    First, 
    $C$ is deformed to $C_t$ for $t$ small, and then projected onto the $y$-axis. The choice of this axis is discussed in Section~\ref{sec:performance}.
    Then, for $y$ in a set of suitable values, the $3$-dimensional slice is projected onto the $x$-axis. 
    Here, the ideal of the critical locus is positive-dimensional.
    So, we employ \texttt{SamplePointsHypersurfaceRegions} to be able to select the correct critical values. Figure~\ref{fig:hypersurfregions} shows a 2-dimensional picture of the different regions of the hypersurface arrangement, and the points sampled from each of them. 
    \begin{figure}
        \centering
        \includegraphics[width=0.8\linewidth]{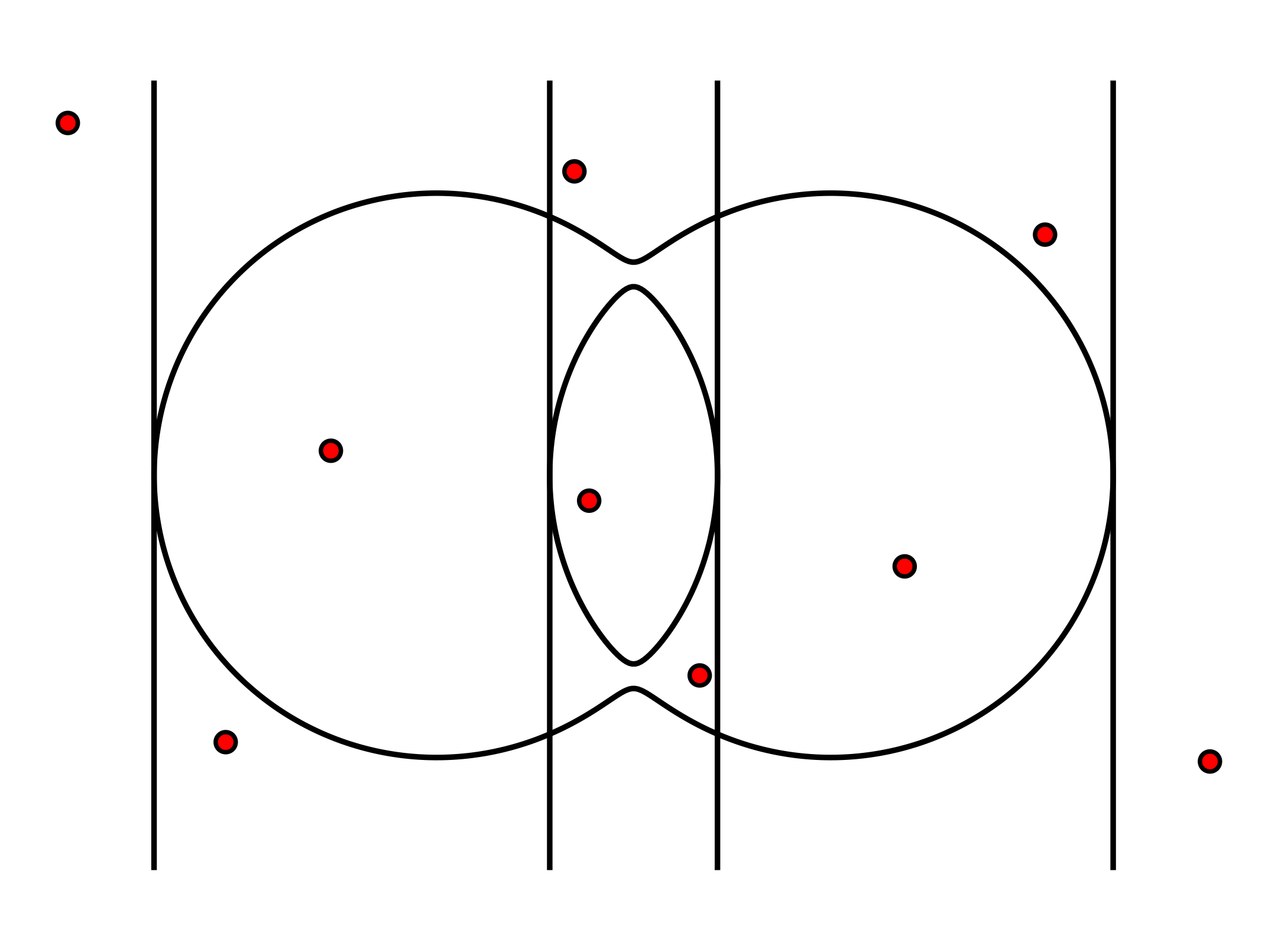}
        \caption{Sampling points to select the correct critical values}
        \label{fig:hypersurfregions}
    \end{figure}
\end{example}

\paragraph{\texttt{1DimVolume}}

If $\operatorname{dim}(C_t ) = 2$, then after computing the relevant critical values $c_1,c_2$ of the projection, the algorithm calls for computing volumes of $1$-dimensional slices at $x_i = v$ for $c_1< v< c_2$. This is done by intersecting the curve $F_t$ with the line $x_i = v$. By convexity, there are only two intersection points that lie inside $C$. Denote them by $\lambda_1$ and $\lambda_2$. Then, the required $1$-dimensional volume is $|\lambda_1 - \lambda_2 | $.

\paragraph{\texttt{SuitableValues}}

Recall the notation of ``suitable" from Lemma 2.6 and the subsequent description. Once we have correctly selected the two critical values of a projection, it remains to select the values at which we take slices.
Let $p_0$ be the smallest singular point of $P$ in the interval $(c_1,c_2)$. 
A randomly sampled set of $\ord(P)$ many values in $(c_1,p_0)$ is suitable with probability $1$, see also \cite[Section 4]{Lairez_volumes_semi_algebraic_sets}.
In practice, we sample at uniform intervals in $(c_1, p_0)$ and then check the invertibility of the induced linear system. 
Although the initial conditions are defined outside the singular locus, the volume function is analytic in the interval $(c_1,c_2)$ and thus the determined solution can be continued through any singular point of $P$ within the interval.

\section{Implementation and experiments}\label{sec:performance}

Our implementation of the above algorithms can be found at~\cite{implementation}. It is written mostly in SageMath, building on the \texttt{ore\_algebra} package to compute Picard--Fuchs operators and to solve them to arbitrary precision. Furthermore, msolve is used to compute $\critPts$ when the ideal of the critical locus is $0$-dimensional, and to find the intersection points in \texttt{1DimVolume}. Finally, when the ideal of the critical locus is positive-dimensional, we use the Julia package {HypersurfaceRegions.jl} \cite{reinke2024hypersurfacearrangementsgenerichypersurfaces} to sample points in the complement of~$V(F_{t_\val}) \cup_{s\in \operatorname{CritVals}} \{x_i = s\} $.

\vspace{0.3cm}

The most expensive step of the volume computation is the process of creative telescoping. Therefore, it is of great interest to minimize the number of times a Picard--Fuchs operator is computed. Let us consider a volume computation for a semi-algebraic set in $\mathbb R^n$. Let $d$ denote the maximum order of all Picard--Fuchs operators computed in the recursive algorithm. 
The general algorithm in \cite{Lairez_volumes_semi_algebraic_sets} would require a creative telescoping computation for every slice value (at most $d$) for every pair of adjacent critical values of a projection onto a coordinate axis, recall \eqref{eq:proj}. If $c$ denotes the maximum number of pairs of adjacent critical values of any projection of a slice, then this would mean that the number of creative telescoping steps required is bounded above by $(c \cdot d)^n$. 

In contrast, by restricting to the class of convex semi-algebraic sets defined by concave polynomials, the number of relevant critical values for any projection is exactly $2$, thus requiring only one telescoping step for each slice. Thus, the number of creative telescoping steps is bounded above by $d^n$. 
This reduces the general recursive algorithm, which traverses every node of a tree, to a recursive algorithm on a path. This improves the number of iterations from the general case by an exponential factor in the dimension $n$. 

\vspace{0.3cm}

Let us now compute exact volumes of various convex bodies. In doing so, we encounter many interesting questions, and opportunities for future work. These examples were run on a personal computer with an Apple M3 chip and $32$ GB of memory. All of these examples can be reproduced using our implementation by running the Jupyter notebook found at~\cite{implementation}.

\begin{example}[Two Euclidean balls in dimension $4$]\label{eq:l24}
    We continue Example~\ref{eg:hypersurfregions} and compute the volume of the intersection of two Euclidean balls in $4$ dimensions, centered at 
    $$\mu_1 = (0,0,0,0),\quad \mu_2 = (1,0,0,0).$$ The operator $P_t$ which annihilates $\vol(C_t)$ for small $t$ has order~$3$ and degree~$4$. The value of the slices $\vol(C_t)$ are computed for $t \in (0, 5625/10000)$, where $5625/10000$ is the smallest real point in the singular locus of $P_t$. 
    Projecting in the reverse order of variables, that is $x_3, x_2, x_1, x_0$, all subsequent ideals of critical points are zero-dimensional. Finally, the volume is computed to be 
    \begin{align*}
        1.&24934384893295779102649229344462517961832527330926\\&37279549959064283763620392536285226171895009497206\\&44758421633474540092111890318221306407347172357265\\&78794809305528585515475494602146469693898082410815\\&98529998412506559308568339206044582076143683749948\\&5954393910960456071341017606352607002 \ldots
    \end{align*}
    These are the first $288$ decimal digits of the period. This was computed in 2 minutes and 15 seconds.
    The precision of the volume can be decided by the user. We can compare this output with a volume computation using the Monte Carlo method. With $10^8$ sample points, we get the volume $1.24952304$, which already differs from the exact volume at the $4^{\text{th}}$ decimal place.
\end{example}

We can compute volumes of intersections of a range of convex bodies in reasonable time. Example~\ref{eg:volumes-many-bodies} shows truncated exact volumes of some convex bodies in dimensions $2$ and $3$, and the (average) computation times. 

\begin{example}[More volumes]\label{eg:volumes-many-bodies}
We compute the volumes of the following convex bodies up to a precision of $10^{-50}$:
\begin{enumerate}
    \item \textit{A triangle} given by $1 - x - y, \;y,\; x - y + 1 $
    \item \textit{A tetrahedron}  given by $x,\; y, \;z, \; 1/3 - x - y- z $
    \item \textit{An ellipse and a circle} given by $1 - 4x^2 - (y-1/2)^2, 1 - (x-1/2)^2 - y^2$
    \item \textit{Two $\ell_4$-balls in $\mathbb R^2$} given by $1 - x^4 - y^4, 1 - (x-1/2)^4 - y^4$
\end{enumerate}

    \begin{center}
\begin{tabular}{ |c|c|c| } 
 \hline
 \textbf{convex body} & \textbf{volume} & \textbf{time} \\ 
 \hline
 Triangle & 1.0000000000000... & 0.9s \\ 
 Tetrahedron  & 0.006172839506172839506... & 6.8s \\ 
 Ellipse and circle & 1.063610448155437831407... & 2m 15.0s \\
 Two $\ell_4$-balls in $\mathbb R^2$ & 2.708344826299720090001... & 10m 10s \\
 \hline
\end{tabular}
\end{center}
\end{example}

There are, however, many examples within our class that do not terminate after days, since the creative telescoping computation becomes too large. Therefore, understanding how the order and degree of the Picard--Fuchs operators change as the input polynomials vary is of interest. 
We make the following observation:
\begin{example}[Picard--Fuchs operator of an $\ell_p$-ball]
    We computed the Picard--Fuchs operator of a single $\ell_p$-ball which annihilates the volumes of the $x_i$-slices. We performed this computation for the following values of $n$ and $p$:
    \begin{align*}
        n = 2,3 & \qquad p \leq 36\\
        n = 4 & \qquad p \leq 28.
    \end{align*}
    With the exception of $n=4$, $p=2$, 
    the Picard--Fuchs operator $P_{x_i}$ 
    has the form
    \begin{align}
        P_{x_1} = (1 - x_i^p) \partial_{x_i} + (n-1)x_i^{p-1}.
    \end{align}
\end{example} 
We expect that a general statement can be made here. The case of $n=4,p=2$ is discussed in Example \ref{eg:spurious poles}.

In the next two examples, we consider the intersection of two convex bodies in~$\mathbb R^2$. Keeping the first one fixed and moving the second, we take note of changes in the order and the degree of the computed Picard--Fuchs operators.

\begin{example}[Translating two $\ell_4$-balls]
    We consider two $\ell_4$-balls in $\mathbb R^2$ centered at $\mu_1 = (0,0)$ and $\mu_2 = (a,0)$ for varying $a$ and compute the Picard--Fuchs operator in the deformation parameter~$t$. 

    For generic values (such as $27/45, 21/20, -1/7, 1$) of $a$, the operator $P_t$ has order $6$ and degree $18$. 
    However, for special values of $a$, the degree drops. For $a = 2$ and $a = -2$, where the intersection vanishes, we obtain $\ord(P_t) = 6$ and $\deg(P_t) = 17$. We ignore the case of $a = 0$ since that corresponds to a single $\ell_p$-ball.

\end{example}

\begin{example}[Rotating two Euclidean balls]
    Consider in $\mathbb R^2$ two $\ell_2$-balls with centers $\mu_1 = (0,0)$ and $\mu_2 = (a,b)$, where $\mu_2$ is a rational point on the unit circle. The Picard--Fuchs operator $P_t$ stays invariant under the choice of $\mu_2$.
    This is because the volume is invariant under rotation. 
    However, we see from the following table that the Picard--Fuchs operator $P_{x_1}$ annihilating the $x_1$-slices of the deformation $C_t$ for $t = 1/50$ has the same degree and order if $a, b \neq 0$. If $a$ or $b$ is equal to $0$ then both the degree and order drop. 

    \begin{center}
    \begin{tabular}{ |c|c|c| } 
     \hline
     $a,b$ & $\ord(P_{x_1})$ & $\deg(P_{x_1})$ \\ 
     \hline
     $1,0$ & 4 & 10 \\ 
     $0,1$ & 3 & 9 \\ 
     $-3/5,4/5$ & 5 & 17 \\ 
     $15/17,8/17$ & 5 & 17 \\ 
     $8/10,-6/10$ & 5 & 17\\ 
     \hline
    \end{tabular}
    \end{center}
\end{example}

Since the volume of a convex body in $\mathbb{R}^n$ is invariant under the action of $\operatorname{SL}_n(\mathbb Q)$ and translations, reducing the number of terms of the input polynomials by such a transformation appears to be a strategy to obtain a Picard--Fuchs operator of a lower order.

\vspace{0.3cm}

We conclude with a comment on the choice of the creative telescoping algorithm. In our current implementation,  creative telescoping is carried out via the implementation of Chyzak's algorithm~\cite{chyzak'salgorithm} in the \texttt{ore\_algebra} package~\cite{KauersMezzarobba2019}. While in Theorem~\ref{thm:takayama}, the integration ideal is computed over the Weyl algebra $D_{t,\mathbf{x}}$, Chyzak's algorithm actually computes elements in
\begin{equation}
     (J + \partial_{x_1}R_{t, \textbf{x}} + \cdots +\partial_{x_n} R_{t,\textbf{x}} ) \quad \cap  \quad  R_t,
\end{equation}
so that differential operators with rational coefficients are also allowed.
This leads in some cases to unwanted behaviors. 
If the certificates of the creative telescoping process have {\em spurious} poles, i.e. poles other than $f$ and powers of $f$, 
then the proof of Theorem~\ref{thm:takayama} does not hold. 
As a consequence, the computed Picard--Fuchs operator may not actually annihilate the period integral as the following extreme example, found in \cite{Mezzarobba-volumes-repo}, shows:

\begin{example}[Spurious poles]\label{eg:spurious poles}
    Consider a single 
    Euclidean ball in $\mathbb{R}^4$ defined by $C =  \{x \in \mathbb{R}^4 \mid f(x)>0\}$ for $f = 1 - (x_1^2 + x_2^2 + x_3^2 + x_4^2)$. Then, $$A = \frac{(\partial_1\bullet f)x_1}{f} = -\frac{2 x_1^2}{f}$$ denotes the rational function whose periods describe the volume of, for example, the $x_4$-slices of $C$. However, the $R_{x_4}$ ideal
    \begin{equation}
        I = (\operatorname{Ann}_{R_{\mathbf{x}}}(A) + \partial_{x_1}R_{ \textbf{x}} + \partial_{x_2}R_{ \textbf{x}} + \partial_{x_3}R_{ \textbf{x}} ) \quad \cap  \quad  R_{x_4}
    \end{equation}
    contains the operator $1$, since the rational function $A$ can be decomposed as a sum of derivatives 
    \begin{align*}
        3A = \partial_{1}\bullet\frac{ 2x_1^3}{f} + \partial_{2}\bullet \frac{ 2x_1^4x_2}{f\cdot(x_4^2+x_1^2-1)}+\partial_{3} \bullet \frac{2x_3x_1^4}{f\cdot(x_4^2+x_1^2-1)}.
    \end{align*}
    From the above equation, we see that for the telescoper $P = 1$ and certificates
    $$Q_1=\frac{x_1}{3},\quad Q_2=\frac{x_1^2 x_2}{3(x_4^2 + x_2^2 - 1)},\quad Q_3 = \frac{x_1^2 x_3}{3(x_4^2 + x_2^2 - 1)},$$ the differential operator $$P - \partial_1 Q_1 - \partial_2 Q_2 - \partial_3 Q_3$$ annihilates $A$. However, $P$ clearly does not annihilate the periods of $A$, since the slices of $C$ have non-zero volume. This does not contradict Theorem~\ref{thm:takayama} since here the certificates do not lie in $D_\mathbf{x}$. 
\end{example}

A more subtle example of unexpected behavior is the following:
\begin{example}[Singular locus of Picard--Fuchs operator]\label{eg:problem2}
    Consider two $\ell_2$-balls centered at $\mu_1 = (0,0,0,0)$ and $\mu_2 =(1,0,0,0)$ as in Example~\ref{eg:hypersurfregions}, the projection of the deformation for $t = 1/1000$ onto the $x_0$ axis gives the Picard--Fuchs operator $1$. 

    If we change the order of projection and project the deformation first onto the $x_1$ axis, we get a Picard--Fuchs operator of degree $10$ and order $3$. For $x_1 = -7/10$, we project onto the $x_0$ axis. Here we encounter a different problem. The singular locus of the operator should contain the critical values of the projection due to the fact that at the critical values the number of connected components of the fiber changes. This shows us that the computed operator does not annihilate the period. 
\end{example}

To avoid issues such as those in the above examples, the implementation allows the input of a custom order of projections via the routine \texttt{ProjectionVariable}. For two $\ell_2$ balls in $\mathbb R^4$, the projection order in Example \ref{eq:l24} avoids these issues. To avoid these issues altogether, one could instead call a creative telescoping algorithm that operates in $D_\mathbf{x}$, such as in MultivariateCreativeTelescoping.jl \cite{brochet2025fastermultivariateintegrationdmodules} (MCT.jl). 
As a first step in this direction, we have made use of an experimental interface with MCT.jl in order to compute in Example~\ref{ex:mathematicaL4L4R4} the  volume of the intersection of two $\ell_4$-balls in $\mathbb{R}^4$ with centers $(0,0,0,0)$ and $(1,0,0,0)$. We determined the volume to $95$ digits in $15$ hours.

We conclude by remarking that even though computation of Picard--Fuchs operators quickly becomes infeasible with increasing dimension and degree, there are examples, see Example~\ref{eg:mathematica2}, where symbolic integration is inconclusive, but where our implementation outperforms numerical integration over CAD cells.

\bibliographystyle{ACM-Reference-Format}
\bibliography{ISSAC_references}

\section*{Additional Experiments}

\begin{example}\label{ex:mathematicaL4L4R4}
    We determine the volume of the intersection of two $\ell_4$-balls in $\mathbb{R}^4$ defined by  $1 - (x^4 + y^4 + z^4 + w^4) > 0$ and $1 - ((x - 1)^4 + y^4 + z^4 + w^4) > 0$. This was computed on a personal computer with an Apple M3 chip and $32$ GB of memory and using an experimental interface to MultivariateCreativeTelescoping.jl in $15$ hours up to $95$ digits as
    \begin{align*} 4.&37856654871924288558479945106166817027285180219207\\&050298477722007062424113388554779398481745775\ldots\,
    \end{align*}
\end{example}

\begin{example}\label{eg:mathematica}
    We computed the volume of the convex planar region defined by $1 - (x^2 + y^2)^4 - 1/10(x^4y^2 + x^2y^4)^2>0$ in 62s with our method on an Apple M1 chip with 8GB of memory to 300 digits to be
    \begin{align*}
    3.&13976124057952680602946138619253044761966453475542\\&729206675302931045918949716548969288957791094\ldots\,
    \end{align*} 
    This is an example where Mathematica \cite{Mathematica14} fails to return anything, as the kernel restarts. The generic CAD is computed as follows 
    \begin{lstlisting}
{t, gcd} = Timing[GenericCylindricalDecomposition[
    1 - (x^2 + y^2)^4 - 1/10*(x^4 y^2 + x^2 y^4)^2 
    > 0, {x, y}]]
    \end{lstlisting}
    and returns timing in seconds, and a description of the region.
    \begin{lstlisting}
{0.026082, {-1 < x < 1 && Root[-10 + 10 x^8 + 
    40 x^6 #1^2 + (60 x^4 + x^8) #1^4 + (40 x^2 
    + 2 x^6) #1^6 + (10 + x^4) #1^8 &, 1] < y < 
    Root[-10 + 10 x^8 + 40 x^6 #1^2 + (60 x^4 + 
    x^8) #1^4 + (40 x^2 + 2 x^6) #1^6 
    + (10 + x^4) #1^8 &, 2], False}}
    \end{lstlisting}
    Neither symbolic nor numerical integration of this region is completed by Mathematica, though the reason is unclear. 
\end{example}
\begin{example}\label{eg:mathematica2}
    We compute the volume of the convex planar region defined by
    $1 - (x^2 + y^2)^3 - 3/10(x^4y^2 + x^2y^4)>0$.
    In Mathematica, symbolic integration fails, but the volume can be computed by numerical integration over the  algebraic cell returned by the generic CAD as follows:
    \begin{lstlisting}
N[Integrate[
    Root[-10 + 10 x^6 + 33 x^4 #1^2 + 33 x^2 #1^4 
    + 10 #1^6 &, 2] - 
    Root[-10 + 10 x^6 + 33 x^4 #1^2 + 33 x^2 #1^4 
    + 10 #1^6 &, 1], {x, -1, 1}], digits]
    \end{lstlisting}
    
    The following table shows the timings of obtaining the volume of the region up to a given precision using the numerical integration approach in comparison to our implementation, on an Apple M1 chip with 8GB of memory. 
    
    \begin{center}
    \begin{tabular}{ |c|c|c| } 
     \hline
     Digits & Mathematica & 
     Our Implementation \\ 
     \hline
     50 & 10.4s & 0.8s \\ 
     100 & 12.3s & 0.9s \\ 
     200 & 16.9s & 1s \\ 
     \hline
    \end{tabular}
    \end{center}
    
\end{example}
\end{document}